\newtheorem{theorem}{Theorem}[section]
\newtheorem{proposition}[theorem]{Proposition}
\newtheorem{lemma}[theorem]{Lemma}
\theoremstyle{definition}
\theoremstyle{remark}
\newtheorem{remark}[theorem]{Remark}
\newcommand{\A}{\ensuremath{\mathscr{A}}}
\newcommand{\B}{\ensuremath{\mathscr{B}}}
\newcommand{\X}{\ensuremath{\mathscr{X}}}
\newcommand{\Z}{\ensuremath{\mathcal{Z}}}
\newcommand{\V}{\ensuremath{\mathscr{V}}}
\newcommand{\C}{\ensuremath{\mathscr{C}}}
\renewcommand{\S}{\ensuremath{\mathscr{S}}}
\newcommand{\D}{\ensuremath{\mathscr{D}}}
\newcommand{\E}{\ensuremath{\mathscr{E}}}
\newcommand{\W}{\ensuremath{\mathscr{W}}}
\newcommand{\M}{\ensuremath{\mathscr{Ms}}}
\newcommand{\ox}{\ensuremath{\otimes}}
\newcommand{\ra}{\ensuremath{\xymatrix@1@C=15pt{\ar[r]&}}}
\newcommand{\Ve}{\ensuremath{\overset{\vee}}}
\newcommand{\PA}{\ensuremath{\mathcal{P} A}}
\newcommand{\QA}{\ensuremath{\mathcal{Q} A}}
\renewcommand{\r}{\ensuremath{\mathscr{R}}}
\newcommand{\T}{\ensuremath{\mathscr{T}}}
\newcommand{\Mkyf}{\ensuremath{\mathbf{Mky}_\textit{fin}}}
\newcommand{\Comod}{\ensuremath{\mathbf{Comod}}}
\newcommand{\GSet}{\ensuremath{G\text{-}\mathbf{Set}}}
\newcommand{\Spn}{\ensuremath{\mathbf{Spn}}}
\newcommand{\CO}{\ensuremath{\mathbf{\small{COCT}}}}
\newcommand{\ModA}{\ensuremath{\mathbf{Mod}(A)}}
\newcommand{\ModB}{\ensuremath{\mathbf{Mod}(B)}}
\newcommand{\Mod}{\ensuremath{\mathbf{Mod}}}
\newcommand{\kMod}{\ensuremath{\mathbf{Mod}_k}}
\newcommand{\CMon}{\ensuremath{\mathbf{CMon}}}
\newcommand{\VMod}{\ensuremath{\mathscr{V}\text{-}\mathbf{Mod}}}
\newcommand{\CAT}{\ensuremath{\mathbf{CAT}}}
\newcommand{\Mky}{\ensuremath{\mathbf{Mky}}}
\newcommand{\Vect}{\ensuremath{\mathbf{Vect}}}
\newcommand{\Rep}{\ensuremath{\mathbf{Rep}}}
\newcommand{\Grn}{\ensuremath{\mathbf{Grn}}}
\newcommand{\Hom}{\ensuremath{\mathrm{Hom}}}
\renewcommand{\o}{\ensuremath{\circ}}
\newcommand{\Ra}{\ensuremath{\xymatrix@1{\ar@{=>}[r]&}}}
\newcommand{\lr}{\ensuremath{\xymatrix@1{\ar@{<->}[r] |-{\object@{|}} & }}}
\begin{document}
\title{Mackey functors on compact closed categories}
\thanks{The authors are grateful for the support of the Australian Research Council Discovery Grant DP0450767, and the first author for the support of an Australian International Postgraduate Research Scholarship, and an International Macquarie University Research Scholarship.}
\author{Elango Panchadcharam}
\author{Ross Street}
\email{\{elango, street\}@maths.mq.edu.au}
\address{Centre of Australian Category Theory, Macquarie University, New South Wales, 2109, Australia}
\dedicatory{Dedicated to the memory of Saunders Mac Lane}
\begin{abstract}
We develop and extend the theory of Mackey functors as an application of enriched category theory. We define
Mackey functors on a lextensive category $\E$ and investigate the properties of the category of Mackey
functors on $\E$. We show that it is a monoidal category and the monoids are Green functors. Mackey functors are
seen as providing a setting in which mere numerical equations occurring in the theory of groups can be
given a structural foundation. We obtain an explicit description of the objects of the Cauchy
completion of a monoidal functor and apply this to examine Morita equivalence of Green functors.
\end{abstract}
\maketitle


\section{Introduction}
Groups are used to mathematically understand symmetry in nature and in mathematics itself. Classically,
groups were studied either directly or via their representations. In the last 40 years, arising from the latter,
groups have been studied using Mackey functors.

Let $k$ be a field. Let $\Rep(G) = \Rep_k(G)$ be the category of $k$-linear representations of the
finite group $G$. We will study the structure of a monoidal category $\Mky(G)$ where the objects are called
Mackey functors. This provides an extension of ordinary representation theory. For example,
$\Rep(G)$ can be regarded as a full reflective sub-category of $\Mky(G)$; the reflection is strong
monoidal (= tensor preserving).
Representations of $G$ are equally representations of the group algebra $kG$; Mackey
functors can be regarded as representations of the "Mackey algebra" constructed from $G$.
While $\Rep(G)$ is compact closed (= autonomous monoidal), we are only able to show that
$\Mky(G)$ is star-autonomous in the sense of \cite{Ba}.

Mackey functors and Green functors (which are monoids in $\Mky(G)$) have been studied fairly
extensively. They provide a setting in which mere numerical equations occurring in group theory
can be given a structural foundation. One application has been to provide relations between $\lambda$- and $\mu$-invariants in Iwasawa theory and between Mordell-Weil groups, Shafarevich-Tate groups,
Selmer groups and zeta functions of elliptic curves (see \cite{BB}).

Our purpose is to give the theory of Mackey functors a categorical simplification and generalization. We
speak of Mackey functors on a compact (= rigid = autonomous) closed category $\T$. However, when $\T$ is
the category  $\Spn(\E)$ of spans in a lextensive category $\E$, we speak of Mackey functors on $\E$.
Further, when $\E$ is the category (topos) of finite $G$-sets, we speak of Mackey functors on $G$.

Sections \ref{Se2}-\ref{Se4} set the stage for Lindner's result \cite{Li1} that Mackey functors, a concept
going back at least as far as \cite{Gr}, \cite{Dr1} and \cite{Di} in group representation theory,
can be regarded as functors out of the category of spans in a suitable category $\E$. The important
property of the category of spans is that it is compact closed. So, in Section \ref{Se5}, we look
at the category $\Mky$ of additive functors from a general compact closed category $\T$
(with direct sums) to the category of $k$-modules. The convolution monoidal structure
on $\Mky$ is described; this general construction (due to Day \cite{Da}) agrees with the usual
tensor product of Mackey functors appearing, for example, in \cite{Bo1}. In fact, again for general
reasons, $\Mky$ is a closed category; the internal hom is described in Section \ref{Se6}. Various
convolution structures have been studied by Lewis \cite{Le} in the context of Mackey functors for compact
Lie groups mainly to provide counter examples to familiar behaviour.

Green functors are introduced in Section \ref{Se7} as the monoids in $\Mky$. An easy construction,
due to Dress \cite{Dr1}, which creates new Mackey functors from a given one, is described in
Section \ref{Se8}. We use the (lax) centre construction for monoidal categories to explain
results of \cite{Bo2} and \cite{Bo3} about when the Dress construction yields a Green functor.

In Section \ref{Se9} we apply the work of \cite{Da4} to show that finite-dimensional Mackey
functors form a $*$-autonomous \cite{Ba} full sub-monoidal category $\Mkyf$ of $\Mky$.

Section \ref{Se11} is rather speculative about what the correct notion of Mackey functor should be
for quantum groups.

Our approach to Morita theory for Green functors involves even more serious use of enriched
category theory: especially the theory of (two-sided) modules. So Section \ref{Se12} reviews this theory
of modules and Section \ref{Se13} adapts it to our context. Two Green functors are Morita equivalent
when their $\Mky$-enriched categories of modules are equivalent, and this happens, by the
general theory, when the $\Mky$-enriched categories of Cauchy modules are equivalent.
Section \ref{Se14} provides a characterization of Cauchy modules.

\section{The compact closed category $\Spn(\E)$ } \label{Se2}
Let $\E$  be a finitely complete category. Then the category $\Spn(\E)$
can be defined as follows. The objects are the objects of the category $\E$ and
morphisms  $U \ra V$ are the isomorphism classes of \emph{spans} from $U$ to $V$
in the bicategory of spans in $\E$ in the sense of \cite{Be}. (Some properties of this bicategory can
be found in \cite{CKS}.) A \emph{span} from $U$ to $V$, in the sense
of \cite{Be}, is a diagram of two morphisms with a common domain $S$, as in
\[
(s_1, S, s_2): \quad
\vcenter{\xygraph{
S="s"
(:[d(1)r(0.9)] {V~.}^-{s_2},
:[d(1)l(0.9)] {U}_-{s_1}
)}}
\]
An isomorphism of two spans $(s_1, S, s_2): U \ra V$ and  $(s'_1, S', s'_2): U \ra V$
is an invertible arrow $h: S \ra S'$ such that $s_1=s'_1 \o h$ and $s_2=s'_2 \o h$. \[\xygraph{
S="s"
(:[d(1)r(1.3)] {V}="b"^-{s_2},
:[d(1)l(1.3)] {U}="a"_-{s_1},
:[d(2.1)] {S'}="c"^-{h}_{\cong}
"c" : "a"^-{s'_1}
"c" : "b"_-{s'_2}
)}
\]
The composite of two spans $(s_1, S, s_2): U \ra V$ and  $(t_1, T, t_2): V \ra W$ is defined to be
$(s_1\o {\text{proj}}_1, T\o S, t_2 \o {\text{proj}}_2): U \ra W$ using the pull-back diagram
as in
\[\xygraph{
{S \times_V T = T \o S}="a"
(:[d(1)r(0.8)] {T}="t"^-{{\text{proj}}_2}
:[d(1)r(0.8)] {W~.}^-{t_2},
:[d(1)l(0.8)] {S}="s"_-{{\text{proj}}_1}
(:[d(1)l(0.8)] {U}_-{s_1},
:[d(1)r(0.8)] {V}="v"^-{s_2})
"t" : "v"_{t_1}
"a" :@{}[d(1.7)] |{\textstyle \txt{pb}}
)}
\]
This is well defined since the pull-back is unique up to isomorphism. The identity span
$(1, U, 1): U \ra U$ is defined by
\[
\xygraph{
U="s"
(:[d(1)r(0.9)] {U}^-{1},
:[d(1)l(0.9)] {U}_-{1}
)}
\]
since the composite of it with a span $(s_1, S, s_2): U \ra V$
is given by the following diagram and is equal to the span $(s_1, S, s_2): U \ra V$
\[\xygraph{
{S}="a"
(:[d(1)r(0.8)] {S}="t"^-{1}
:[d(1)r(0.8)] {V~.}^-{s_2},
:[d(1)l(0.8)] {U}="s"_-{s_1}
(:[d(1)l(0.8)] {U}_-{1},
:[d(1)r(0.8)] {U}="v"^-{1})
"t" : "v"_{s_1}
"a" :@{}[d(1.7)] |{\textstyle \txt{pb}}
)}
\]
This defines the category $\Spn(\E)$. We can write $\Spn(\E)(U,V) \cong [\E / (U \times V)]$
where square brackets denote the isomorphism classes of morphisms.

$\Spn(\E)$ becomes a monoidal category under the tensor product
\[
\xymatrix@1{
\Spn(\E) \times \Spn(\E) \ar[r]^-{\times} & \Spn(\E)}
\]
defined by
\[\xymatrix@1{(U,V) \ar@{|->}[r] & U \times V}\]
\[
[ \xymatrix@1{U \ar[r]^S & U'}, \xymatrix@1{V \ar[r]^T & V'}]
\xymatrix@1{\ar@{|->}[r] &}
[ \xymatrix@1{U \times V \ar[r]^-{S \times T} & U' \times V'}].
\]
This uses the cartesian product in $\E$ yet is not the cartesian product in  $\Spn(\E)$.
It is also compact closed; in fact, we have the following isomorphisms:
$\Spn(\E)(U,V) \cong \Spn(\E)(V, U)$ and $\Spn(\E)(U \times V, W) \cong \Spn(\E)(U, V \times W)$.
The second isomorphism can be shown by the following diagram
\[\vcenter{\xygraph{
S="s"
(:[d(1)r(0.9)] {W},
:[d(1)l(0.9)] {U \times V}
)}}
\quad \lr \quad
\vcenter{\xygraph{
S="s"
(:[d(1)r(0.9)] {W},
:[d(1)l(0.9)] {U},
:[d(1)] {V},
)}}
\quad \lr \quad
\vcenter{\xygraph{
S="s"
(:[d(1)r(0.9)] {V \times W~.},
:[d(1)l(0.9)] {U}
)}}
\]

\section{Direct sums in $\Spn(\E)$} \label{Se3}
Now we assume $\E$ is lextensive.
References for this notion are \cite{Sc}, \cite{CLW}, and \cite{CL}. A category $\E$ is
called lextensive when it has finite limits and finite coproducts such that the functor
\[
\E / A \times \E / B \ra \E / A+B\ ; \quad
\vcenter{\xymatrix{X \ar[d]^f \\ A}} \quad , \quad \vcenter{\xymatrix{Y \ar[d]^g \\ B}} \xymatrix{\ar@{|->}[r] &}
\vcenter{\xymatrix{X+Y \ar[d]^{f+g} \\ A+B}}
\]
is an equivalance of categories for all objects $A$ and $B$. In a lextensive category, coproducts
are disjoint and universal and $0$ is strictly initial. Also we have that the canonical morphism
\[
(A \times B) + (A \times C) \ra A \times (B+C)
\]
is invertible. It follows that $ A \times 0 \cong 0.$

In $\Spn(\E)$ the object $U+V$ is the direct sum of $U$ and $V.$ This can be shown
as follows (where we use lextensivity):
\begin{align*}
\Spn(\E) (U+V, W) & \cong [\E / ((U+V) \times W)] \\
    & \cong [\E / ((U \times W) +( V \times W)) ] \\
    & \simeq [\E / (U \times W)]  \times [\E / (V \times W)]  \\
    &\cong \Spn(\E)(U,W) \times \Spn(\E)(V,W);
\end{align*}
and so $ \Spn(\E)(W, U+V) \cong \Spn(\E)(W,U) \times \Spn(\E)(W,V)$.
Also in the category $\Spn(\E)$, $0$ is the zero object (both initial and terminal):
\[
\Spn(\E)(0, X) \cong [ \E / (0 \times X)] \cong [\E / 0] \cong 1
\]
and so $\Spn(\E)(X,0) \cong 1$.
It follows that $\Spn(\E)$ is a category with homs enriched in commutative monoids.

The addition of two spans $(s_1, S, s_2) : U \ra V$ and $(t_1, T, t_2): U \ra V$
is given by \linebreak $(\nabla \o (s_1 + t_1), S+T, \nabla \o (s_2 + t_2)): U \ra V$ as in
\[\vcenter{\xygraph{
S="s"
(:[d(1)r(0.9)] {V}^-{s_2},
:[d(1)l(0.9)] {U}_-{s_1}
)}}
\quad  + \quad
\vcenter{\xygraph{
T="s"
(:[d(1)r(0.9)] {V}^-{t_2},
:[d(1)l(0.9)] {U}_-{t_1}
)}}
\quad  =
\vcenter{\xygraph{
{S + T}="a"
(:[d(1)r(0.7)] {V+V}="t" ^-{s_2 + t_2}
:[d(1)r(0.7)] {V~.}="v" ^-{\nabla},
:[d(1)l(0.7)] {U+U}="s" _-{s_1 + t_1}
:[d(1)l(0.7)] {U}="u" _-{\nabla})
"a" : @/_6ex/ _-{[s_1,t_1]} "u"
"a" : @/_-6ex/ ^-{[s_2,t_2]} "v"
}}
\]
Summarizing, $\Spn(\E)$ is a monoidal commutative-monoid-enriched category.

There are functors $(-)_*: \E \ra \Spn(\E)$ and $(-)^*: \E^{\text{op}} \ra \Spn(\E)$
which are the identity on objects and take $f: U \ra V$ to $f_*=(1_U, U, f)$ and
$f^*=(f, U, 1_U)$, respectively.

For any two arrows $\xymatrix@1{U \ar[r]^f &V \ar[r]^g & W}$ in $\E$, we have
$(g \o f)_* \cong g_* \o f_*$ as we see from the following diagram
\[
\xygraph{
{U}="a"
(:[d(0.8)r(0.7)] {V}="t"^-{f}
:[d(0.8)r(0.7)] {W~.}^-{g},
:[d(0.8)l(0.7)] {U}="s"_-{1}
(:[d(0.8)l(0.7)] {U}_-{1},
:[d(0.8)r(0.7)] {V}="v"^-{f})
"t" : "v"_{1}
"a" :@{}[d(1.5)] |{\textstyle \txt{pb}}
)}
\]
Similarly $(g \o f)^* \cong f^* \o g^*$.

\section{Mackey functors on $\E$} \label{Se4}

A \emph{Mackey functor} $M$ from $\E$ to the category $\kMod$ of $k$-modules consists
of two functors
\[ M_*: \E \ra \kMod, \quad M^*: \E^{\text{op}} \ra \kMod \]
such that:
\begin{enumerate}
\item $M_*(U) = M^*(U) \quad (=M(U))$ for all $U$ in $\E$
\item
for all pullbacks
\[\xygraph{
P="a"
(:[r(1.8)] {V}^-{q}
:[d(1.4)] {W}="t"^-{s},
:[d(1.4)] {U}_-{p}
:"t"_-{r}
)}\]
in $\E$, the square (which we call a \emph{Mackey square})
\[\xygraph{
{M(U)}="a"
(:[r(1.8)] {M(W)}_-{M_*(r)}
:[u(1.4)] {M(V)}="t"_-{M^*(s)},
:[u(1.4)] {M(P)}^-{M^*(p)}
:"t"^-{M_*(q)}
)}\]
commutes, and
\item for all coproduct diagrams
\[
\xymatrix@1{
U \ar[r]^-{i} & {U+V} & V \ar[l]_-{j} }
\]
in $\E$, the diagram
\[
\xymatrix@C=7ex{
{M(U)} \ar@<-0.8ex>[r]_-{M_*(i)}
& {M(U+V)} \ar@<-1.1ex>[l]_-{M^*(i)} \ar@<1.1ex>[r]^-{M^*(j)}
& {M(V)} \ar@<1.0ex>[l]^-{M_*(j)} }
\]
is a direct sum situation in \kMod. (This implies $M(U+V) \cong M(U)\oplus M(V)$.)
\end{enumerate}

A morphism $\theta : M \ra N$ of Mackey functors is a family of morphisms
$\theta_U : M(U) \ra$ $ N(U)$ for $U$ in $\E$ which defines natural transformations
$\theta_* : M_* \ra N_*$ and $\theta^* : M^* \ra N^*$.

\begin{proposition} (Lindner \cite{Li1})
The category $\Mky(\E,\kMod)$ of Mackey functors, from a lextensive category $\E$ to
the category $\kMod$ of $k$-modules, is equivalent to $[\Spn(\E), \kMod]_+$, the category
of coproduct-preserving functors.
\end{proposition}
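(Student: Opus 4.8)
The plan is to establish an equivalence of categories by constructing functors in both directions and showing they are mutually quasi-inverse. The heart of the matter is that a Mackey functor $M$ is precisely the data needed to assemble a single coproduct-preserving functor $\widehat{M}\colon \Spn(\E) \ra \kMod$, so I would spend most of the effort constructing $\widehat{M}$ from the pair $(M_*, M^*)$ and checking functoriality.

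First I would define, for a Mackey functor $M$, a functor $\widehat{M}$ on $\Spn(\E)$ that agrees with $M$ on objects. On a span $(s_1, S, s_2)\colon U \ra V$, I would set
\[
\widehat{M}(s_1, S, s_2) = M_*(s_2) \o M^*(s_1)\colon M(U) \ra M(V),
\]
using that $M_*$ and $M^*$ share the common value $M(S)$. I would first check this is well defined on isomorphism classes of spans, which is immediate from the functoriality of $M_*$ and $M^*$ on the isomorphism $h$ relating two representatives. The main work is verifying functoriality: given composable spans $(s_1, S, s_2)\colon U \ra V$ and $(t_1, T, t_2)\colon V \ra W$, their composite is built from the pullback $P = T \o S$, and I must show
\[
M_*(t_2 \o \text{proj}_2) \o M^*(s_1 \o \text{proj}_1) = M_*(t_2)\o M^*(t_1)\o M_*(s_2) \o M^*(s_1).
\]
This is exactly where condition (2) enters: the Mackey square attached to the pullback of $s_2$ along $t_1$ gives $M_*(\text{proj}_2)\o M^*(\text{proj}_1) = M^*(t_1)\o M_*(s_2)$, and then functoriality of $M_*$, $M^*$ (using the facts $(g\o f)_* \cong g_* \o f_*$ and $(g\o f)^* \cong f^*\o g^*$ from Section~\ref{Se3}) closes the gap. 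Preservation of identity spans is trivial.

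Next I would check that $\widehat{M}$ preserves coproducts. Since direct sums in $\Spn(\E)$ are given by $U+V$ (Section~\ref{Se3}) and are built from the coproduct injections $i, j$ regarded as spans, condition (3) says exactly that the images under $\widehat{M}$ of the relevant spans $i_*, i^*, j_*, j^*$ form a direct sum diagram in $\kMod$; hence $\widehat{M}(U+V)\cong \widehat{M}(U)\oplus \widehat{M}(V)$ compatibly. Conversely, given a coproduct-preserving $F\colon \Spn(\E)\ra \kMod$, I would recover a Mackey functor by precomposing with the two functors $(-)_*\colon \E \ra \Spn(\E)$ and $(-)^*\colon \E^{\op}\ra \Spn(\E)$ from Section~\ref{Se3}, setting $M_* = F\o(-)_*$ and $M^* = F\o(-)^*$. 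These automatically agree on objects; condition (2) follows because every pullback square in $\E$ yields an isomorphism of spans $q_* \o p^* \cong s^* \o r_*$ in $\Spn(\E)$ (this is precisely the span-composition identity witnessed by the pullback), which $F$ sends to the commuting Mackey square; and condition (3) is the coproduct-preservation of $F$ transported along these embeddings.

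Finally I would verify that these two assignments are inverse up to natural isomorphism and extend to morphisms, so that they assemble into an equivalence. A morphism $\theta$ of Mackey functors gives a natural transformation between the corresponding functors on $\Spn(\E)$ because naturality with respect to arbitrary spans reduces, via the factorization $\widehat{M}(s_1,S,s_2)=M_*(s_2)\o M^*(s_1)$, to the separate naturalities of $\theta_*$ and $\theta^*$; conversely any natural transformation restricts along $(-)_*$ and $(-)^*$ to such a family. \textbf{I expect the main obstacle to be} the functoriality check for $\widehat{M}$, specifically marshalling the Mackey-square condition together with the pullback-compatibility of $(-)_*$ and $(-)^*$ to handle span composition cleanly; the bookkeeping of which leg is covariant and which is contravariant is where errors are likeliest, and getting the single decisive pullback square (the one defining the composite span) to line up with an instance of condition (2) is the crux of the whole argument.
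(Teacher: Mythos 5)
Your overall route is identical to the paper's: define $\widehat{M}$ on a span by $M_*(s_2)\o M^*(s_1)$, use the Mackey square attached to the defining pullback to get functoriality, identify condition (3) with coproduct preservation, and recover $(M_*,M^*)$ from a coproduct-preserving $F$ by restriction along $(-)_*$ and $(-)^*$, with condition (2) coming from the span identity $s^*\o r_* = (p,P,q) = q_*\o p^*$. All of that matches the paper step for step.

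The one genuine gap is your well-definedness step. You assert that invariance under an isomorphism of spans $h\colon (s_1,S,s_2)\ra (s'_1,S',s'_2)$ is ``immediate from the functoriality of $M_*$ and $M^*$''. It is not: functoriality only gives
\[
M_*(s_2)\o M^*(s_1) \;=\; M_*(s'_2)\o M_*(h)\o M^*(h)\o M^*(s'_1),
\]
so you still need $M_*(h)\o M^*(h)=1$, and nothing about the functoriality of $M_*$ and $M^*$ \emph{separately} relates the covariant value $M_*(h)$ to the contravariant value $M^*(h)$; a priori these are unrelated maps in opposite directions. The paper closes exactly this point by applying condition (2) to the pullback square with vertex $S'$, identity legs on top and right, and legs $h^{-1}$, $h$ on the left and bottom; the Mackey square for it yields $M^*(h^{-1})=M_*(h)$ and $M_*(h^{-1})=M^*(h)$, hence $M^*(h)=M_*(h)^{-1}$, which is what makes the two composites agree. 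So well-definedness is itself an instance of the Mackey axiom, not a consequence of functoriality alone; once you insert this argument, the rest of your proposal goes through as in the paper.
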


\begin{proof}
Let $M$ be a Mackey functor from $\E$ to $\kMod$. Then we have a pair $(M_*, M^*)$
such that $M_*: \E \ra \kMod$, $M^*: \E^{\text{op}} \ra \kMod$ and $M(U) = M_*(U) = M^* (U)$.
Now define a functor $M: \Spn(\E) \ra \kMod$ by $M(U) = M_*(U) = M^*(U)$ and
\[ M \left ( \vcenter{\xygraph{
 S="s"
(:[d(1)r(0.8)] {V}^-{s_2},
:[d(1)l(0.8)] {U}_-{s_1}
)  }}  \right )
\quad  = \quad
\big ( \xygraph{
{M(U)}
(:[r(1.8)]{M(S)}^-{M^*(s_1)}
:[r(1.8)]{M(V)}^-{M_*(s_2)}
)}
\big ).
\]
We need to see that $M$ is well-defined. If $h: S \ra S'$ is an
isomorphism, then the following diagram
\[\xygraph{
{S'}="a"
(:[r(2)] {S'}^-{1}
:[d(1.2)] {S'}="t"^-{1},
:[d(1.2)] {S}_-{h^{-1}}
:"t"_-{h}
)}
\]
is a pull back diagram. Therefore $M^*(h^{-1})  = M_*(h)$ and $M_*(h^{-1})  = M^*(h)$.
This gives, $M_*(h)^{-1} = M^*(h)$. So if $h:(s_1,S,s_2) \ra (s'_1,S',s'_2)$ is an isomorphism
of spans, we have the following commutative diagram.
\[
\xymatrix @=9ex@R=11ex@C=11ex@ur{
{M(U)} \ar[r]^-{M^*(s_1)}  \ar[d]_-{M^*(s'_1)} & {M(S)} \ar[d]^-{M_*(s_2)}
\ar@<1ex>[dl]^-{M_*(h)} \\
{M(S')} \ar[r]_-{M_*(s'_2)} \ar@<1ex>[ur]^-{M^*(h)} & {M(V)} }
\]
Therefore we get
\[
M_*(s_2) M^*(s_1) = M_*(s'_2)M^*(s'_1).
\]
From this definition $M$ becomes a functor, since
\begin{equation*}
\begin{split} \hspace{-5pt}
 M \left ( \vcenter{\xygraph{
{P}="a"
(:[d(1)r(0.9)] {T}="t"^-{p_2}
:[d(1)r(0.9)] {W}^-{t_2},
:[d(1)l(0.9)] {S}="s"_-{p_1}
(:[d(1)l(0.9)] {U}_-{s_1},
:[d(1)r(0.9)] {V}="v"^-{s_2})
"t" : "v"_{t_1}
"a" :@{}[d(2)] |{\textstyle \txt{pb}}
)}}  \right )
& =
\vcenter{\xygraph{
{M(U)}="u"
(:[r(1.7)]{M(P)}="p"^-{M^*(p_1s_1)}
:[r(1.7)]{M(W)}="w"^-{M_*(t_2p_2)},
:[d(1.1)r(0.9)]{M(S)}="s"_-{M^*(s_1)}
:[d(1.1)r(0.9)] {M(V)}="v"_-{M_*(s_2)}
:[u(1.1)r(0.9)] {M(T)}="t"_-{M^*(t_1)}
:"w"_-{M_*(t_2)}
"s":"p"^-{M^*(p_1)}
"p":"t"^{M_*(p_2)}
"p" :@{}[d(2.3)r(0.3)] |{\textstyle \txt{Mackey}}
)}}  \\
& =
( \xygraph{
{M(U)}
(:[r(1.7)]{M(V)}^-{M(s_1, S, s_2)}
:[r(1.7)]{M(W)}^-{M(t_1, T, t_2)}
)} ),
\end{split}
\end{equation*}
where $P=S \times_V T$ and $p_1$ and $p_2$ are the projections 1 and 2 respectively,
so that
\[
M((t_1, T, t_2)  \o (s_1, S, s_2)) = M(t_1, T, t_2) \o M(s_1, S, s_2).
\]
The value of $M$ at the identity span $(1, U, 1): U \ra U$ is given by
\begin{equation*}
\begin{split}
M \left ( \vcenter{\xygraph{
{U}
(:[d(1)r(0.9)] {U}^-{1},
:[d(1)l(0.9)] {U}_-{1}
)}} \right )
& \quad =  \quad
( \xygraph{
{M(U)}
(:[r(1.5)]{M(U)}^-{1}
:[r(1.5)]{M(U)}^-{1}
)}
) \\
& \quad = \quad
( 1 : \xygraph{
{M(U)}
(:[r(1.5)]{M(U)}
)}
).
\end{split}
\end{equation*}

Condition (3) on the Mackey functor clearly is equivalent to the requirement that
$M: \Spn(\E) \ra \kMod$ should preserve coproducts.

Conversely, let $M: \Spn(\E) \ra \kMod$ be a functor. Then we can define two
functors $M_*$ and $ M^*$, referring to the diagram
\[
\xygraph{
{\E}="a"
(:[r(1.7)]{\Spn(\E)}="b"^-{(-)_*}
:[r(2.2)]{\kMod ~,}^-{M},
[d(0.8)]{\E^{\text{op}}}="c"
"c":"b"_-{(-)^*}
)}
\]
by putting $M_* = M \o (-)_*$ and $M^* = M \o (-)^*$.
The Mackey square is obtained by using the functoriality of $M$ on the composite
span
\[
s^* \o r_* = (p, P, q) = q_* \o p^* .
\]
The remaining details are routine.
\end{proof}


\section{Tensor product of Mackey functors} \label{Se5}
We now work with a general compact closed category $\T$ with finite products. It
follows (see \cite{Ho}) that $\T$ has direct sums and therefore that $\T$ is enriched in the
monoidal category $\V$ of commutative monoids. We write $\ox$ for the tensor product in
$\T$, write $I$ for the unit, and write $(-)^*$ for the dual. The main example we have in mind
is $\Spn(\E)$ as in the last section where $\ox = \times, I=1,$ and $V^*=V$.
A Mackey functor on $\T$ is an additive functor $M: \T \ra \kMod$.

Let us review the monoidal structure on the category $\V$ of commutative monoids; the binary
operation of the monoids will be written additively. It is monoidal closed. For $A, B \in \V,$ the commutative monoid
\[
[A,B] = \lbrace f: A \ra B ~ | ~ f ~\text{is a commutative monoid morphism} \rbrace ,
\]
with pointwise addition, provides the internal hom and there is a tensor product $A \ox B$ satisfying
\[
\V(A\ox B, C) \cong \V(A, [B,C]).
\]
The construction of the tensor product is as follows.
The free commutative monoid $FS$ on a set $S$ is
\[
FS = \lbrace u: S \ra \mathbb{N} ~ | ~ u(s)=0 ~ \text{for all but a finite number
of} ~ s \in S \rbrace \subseteq \mathbb{N}^{S}.
\]
For any $A,B \in \V$,
\begin{equation*}
A \ox B  = \left (
\begin{aligned}
 F(A \times B) / & (a+a', b) \sim (a,b) + (a',b) \\
            & (a, b+b') \sim (a,b) + (a,b')
\end{aligned}
\right ).
\end{equation*}

We regard $\T$ and $\kMod$ as $\V$-categories. Every $\V$-functor
$\T \ra \kMod$ preserves finite direct sums. So $[\T, \kMod]_+$
is the $\V$-category of $\V$-functors.

For each $V \in \V$ and $X$ an object of a $\V$-category $\X$, we write $V \ox X$ for the object
(when it exists) satisfying
\[
\X(V \ox X, Y) \cong [V, \X(X,Y)]
\]
$\V$-naturally in $Y$. Also the coend we use is in the $\V$-enriched sense:
for the functor $T: \C^{\text{op}} \ox \C \ra \X$, we have a coequalizer
\[
\xymatrix@C=5ex{
{\displaystyle \sum_{V, W} \C(V,W) \ox T(W,V)}
\ar@<0ex>[r] \ar@<1.7ex>[r]
& {\displaystyle \sum_V T(V, V)} \ar@<0.5ex>[r]
& {\displaystyle \int^V T(V,V)}}
\]
when the coproducts and tensors exist.

The tensor product of Mackey functors can be defined by convolution (in the sense of \cite{Da}) in
$[\T, \kMod]_+$ since $\T$ is a monoidal category. For Mackey functors
$M$ and $N$, the tensor product $M*N$ can be written as follows:
\begin{equation*}
\begin{split}
(M * N)(Z) & = \int^{X,Y} \T (X \ox Y, Z) \ox M(X) \ox_k N(Y) \\
& \cong \int^{X,Y} \T (Y, X^* \ox Z) \ox M(X) \ox_k N(Y) \\
& \cong \int^{X} M(X) \ox_k N(X^* \ox Z) \\
&  \cong \int^{Y} M(Z\ox Y^*) \ox_k N(Y) .
\end{split}
\end{equation*}
the last two isomorphisms are given by the Yoneda lemma.

The \emph{Burnside functor} $J$ is defined to be the Mackey functor on $\T$ taking an
object $U$ of $\T$ to the free $k$-module on $\T(I,U)$. The Burnside functor is the unit
for the tensor product of the category $\Mky$.

This convolution satisfies the necessary
commutative and associative conditions for a symmetric monoidal category (see \cite{Da}).
$[\T, \kMod]_+$ is also an abelian category (see \cite{Fr}).

When $\T$ and $k$ are understood, we simply write $\Mky$ for this category
$[\T, \kMod]_+$.

\section{The Hom functor} \label{Se6}

We now make explicit the closed structure on $\Mky$. The Hom Mackey functor
is defined by taking its value at the Mackey functors $M$ and $N$ to be
\[
\Hom (M, N)(V) = \Mky(M(V^* \ox -), N),
\]
functorially in $V$. To see that this hom has the usual universal property with respect to tensor,
notice that we have the natural bijections below (represented by horizontal
lines).
\[
\infer{L(V) \ra \Mky (M(V^* \ox -), N) ~ ~
    \text{natural in $V$}}{
\infer{L(V) \ra {\displaystyle\int_U \text{Hom}_k (M(V^* \ox U), N(U)) ~ ~
    \text{natural in $V$}}}{
\infer{L(V) \ra \Hom_k (M(V^* \ox U), N(U))~ ~
    \text{dinatural in $U$ and natural in $V$}}{
\infer{L(V) \ox_k M(V^* \ox U) \ra N(U) ~ ~
    \text{natural in $U$ and dinatural in $V$}}
{(L*M)(U) \ra  N(U) ~ ~ \text{natural in~} U }}}}
\]
We can obtain another expression for the hom using the isomorphism
\[
\T(V \ox U, W) \cong \T(U, V^* \ox W)
\]
which shows that we have adjoint functors
\[
\xymatrix@C=8ex{\T \ar@{}[r]|{\bot} \ar@<0.3ex>@/^/ [r] ^{V \ox -} &
\T ~. \ar@<0.3ex>@/^/ [l] ^{V^* \ox -}}
\]
Since $M$ and $N$ are Mackey functors on $\T,$ we obtain a diagram
\[
\xymatrix@C=2ex{\T \ar@{}[rr]|{\bot} \ar@<0.3ex>@/^/ [rr] ^{V \ox -} \ar[dr]_N &&
\T \ar@<0.3ex>@/^/ [ll] ^{V^* \ox -} \ar[dl]^M \\
& \kMod}
\]
and an equivalence of natural transformations
\[
\infer{M(V^* \ox -) \Longrightarrow  N}{M \Longrightarrow N(V \ox -)}.
\]
Therefore, the Hom Mackey functor is also given by
\[
\Hom (M, N)(V) = \Mky (M, N(V \ox -)).
\]

\section{Green functors} \label{Se7}
A \emph{Green functor} $A$ on $\T$ is a Mackey functor (that is, a coproduct preserving functor
$A:  \T \ra \kMod$) equipped with a monoidal structure made up of a natural transformation
\[
\mu: A(U) \ox_k A(V) \ra A(U \ox V),
\]
for which we use the notation $\mu(a \ox b)= a.b$ for $a\in A(U)$, $b \in A(V)$, and
a morphism
\[
 \eta: k \ra A(1),
\]
whose value at $1 \in k$ we denote by 1.
Green functors are the monoids in $\Mky$. If $A, B: \T \ra \kMod$
are Green functors then we have an isomorphism
\[
\Mky(A*B,C) \cong \text{Nat}_{U,V} (A(U) \ox_k B(V), C(U \ox V) ).
\]
Referring to the square
\[\xygraph{
{\T \ox \T}="a"
(:[r(3.2)] {\kMod \ox \kMod}^-{A \ox B}
:[d(1.2)] {\kMod~~~,}="t"^-{\ox_k},
:[d(1.2)] {\T}_-{\ox}
:"t"_-{C}
)}
\]
we write this more precisely as
\[
\Mky(A*B,C) \cong [\T \ox \T, \kMod](\ox_k \o (A \ox B), C \o \ox).
\]
The Burnside functor $J$ and $\text{Hom}(A, A)$ (for any Mackey functor $A$) are monoids in $\Mky$ and so are
Green functors.

We denote by $\Grn(\T, \kMod)$ the category of Green functors on $\T$. When $\T$ and $k$ are
understood, we simply write this as $\Grn (=\mathbf{Mon}(\Mky))$ consisting of the monoids in $\Mky$.

\section{Dress construction} \label{Se8}
The Dress construction (\cite{Bo2}, \cite{Bo3}) provides a family of endofunctors
$D(Y,-)$ of the category $\Mky$, indexed by the objects $Y$ of $\T$. The Mackey functor
defined as the composite
\[
\xymatrix@1{
\T \ar[r]^{-\ox Y} & \T \ar[r]^M & \Mod_k}
\]
is denoted by $M_Y$ for $M\in \Mky$; so $M_Y(U)=M(U \ox Y)$.
We then define the \emph{Dress construction}
 \[
D: \T \ox \Mky \ra \Mky
\]
by $D(Y,M)=M_Y$. The $\V$-category $\T \ox \Mky$ is monoidal via the pointwise
structure:
\[
(X,M) \ox (Y,N) = (X\ox Y, M*N).
\]

\begin{proposition} \label{Pro8.1}
The Dress construction
\[
D: \T \ox \Mky \ra \Mky
\]
is a strong monoidal $\V$-functor.
\end{proposition}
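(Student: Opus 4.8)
The plan is to exhibit $D$ as a strong monoidal $\V$-functor by producing the two structure isomorphisms — a multiplicativity constraint
\[
D(X,M) * D(Y,N) \;\cong\; D\bigl((X,M) \ox (Y,N)\bigr), \qquad\text{i.e.}\qquad M_X * N_Y \;\cong\; (M*N)_{X \ox Y},
\]
together with a unit constraint $J \cong D(I,J) = J_I$ — and then verifying that these satisfy the associativity and unit coherence axioms. The heart of the argument is the multiplicativity constraint, which I would establish by a coend computation; the unit constraint is immediate, and the coherence is the part demanding the most bookkeeping.

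For the multiplicativity constraint I would start from the convolution formula
\[
(M_X * N_Y)(Z) \;=\; \int^{A,B} \T(A \ox B, Z) \ox M(A \ox X) \ox_k N(B \ox Y),
\]
and rewrite the two module factors using the density (co-Yoneda) formula $M(W) \cong \int^{A'} \T(A', W) \ox M(A')$, valid for every $\V$-functor $M: \T \ra \kMod$. After applying Fubini to interchange the coends, the inner coends in $A$ and $B$ can be collapsed again by density: using the compact-closed isomorphisms $\T(A', A \ox X) \cong \T(A' \ox X^*, A)$ and $\T(B', B \ox Y) \cong \T(B' \ox Y^*, B)$ to put the hom-objects into the shape $\T(C,A)$, density contracts $\int^{A} \T(A'\ox X^*, A) \ox \T(A \ox B, Z)$ to $\T(A' \ox X^* \ox B, Z)$, and similarly in $B$. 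What remains is
\[
\int^{A',B'} \T(A' \ox B' \ox X^* \ox Y^*, Z) \ox M(A') \ox_k N(B').
\]
Finally I would invoke $(X \ox Y)^* \cong X^* \ox Y^*$ together with $\T(C \ox W^*, Z) \cong \T(C, Z \ox W)$ to rewrite the hom-object as $\T(A' \ox B', Z \ox X \ox Y)$, identifying the whole expression with $(M*N)(Z \ox X \ox Y) = (M*N)_{X\ox Y}(Z)$, as required. It is worth stressing that this collapse uses density and compactness \emph{only}: one must not try to reindex along the functor $- \ox X$, since in a general compact closed category this is merely left adjoint to $- \ox X^*$ and is far from an equivalence.

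The unit constraint is the isomorphism $J_I(U) = J(U \ox I) \cong J(U)$ induced by the unit isomorphism $U \ox I \cong U$ of $\T$, and $(I,J)$ is precisely the monoidal unit of $\T \ox \Mky$. It then remains to check the coherence axioms for a strong monoidal functor: that the multiplicativity constraints are $\V$-natural in $(X,M)$ and $(Y,N)$ and compatible with the associativity and symmetry constraints of $\T \ox \Mky$ and of the convolution structure on $\Mky$, and likewise for the unit. I expect this verification to be the main obstacle — not because any single diagram is deep, but because each constraint isomorphism above is assembled from several instances of density, Fubini, and the coherence isomorphisms of $\T$, so these must be tracked carefully through the pentagon and hexagon. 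The saving grace is that every object in sight is a coend of copowers of representables, so by the Yoneda lemma a morphism between two such is determined by its effect on generators; this reduces each coherence diagram to the corresponding coherence in $\T$ (for the tensor indices) and in the already-established Day convolution structure on $\Mky$ (for the Mackey-functor factors), both of which commute by construction.
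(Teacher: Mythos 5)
Your proposal is correct and follows essentially the same route as the paper: a coend calculation that rewrites $(M_X * N_Y)(Z)$ using co-Yoneda density and the compact-closed hom isomorphisms $\T(A', A \ox X) \cong \T(A' \ox X^*, A)$ to collapse the coends and arrive at $(M*N)(Z \ox X \ox Y) \cong (M*N)_{X \ox Y}(Z)$, with the unit constraint and coherence axioms treated as routine, exactly as in the paper. The only cosmetic difference is that you start from the unreduced double-coend convolution formula, whereas the paper starts from the reduced form $\int^U M_X(U) \ox_k N_Y(U^* \ox Z)$; both arguments are otherwise the same sequence of density, duality, and symmetry steps.
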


\begin{proof}
We need to show that $D((X,M) \ox (Y,N)) \cong D(X,M)*D(Y,N)$; that is,
$M_X*M_Y \cong (M*N)_{X \ox Y}$.
For this we have the calculation
\begin{equation*}
\begin{split}
(M_X*N_Y)(Z) & \cong \int^U M_X(U) \ox_k N_Y(U^* \ox Z) \\
    & = \int^U M(U \ox X) \ox_k N(U^* \ox Z \ox Y) \\
    & \cong \int^{U,V} \T(V,U \ox X) \ox M(V) \ox_k N(U^* \ox Z \ox Y) \\
    & \cong \int^{U,V} \T(V \ox X^*,U) \ox M(V) \ox_k N(U^* \ox Z \ox Y) \\
    & \cong \int^V M(V) \ox_k N(V^* \ox X \ox Z \ox Y) \\
    & \cong (M*N)(Z \ox X \ox Y) \\
    & \cong (M*N)_{X \ox Y}(Z).
\end{split}
\end{equation*}
Clearly we have $D(I,J) \cong J$. The coherence conditions are readily checked.
\end{proof}
We shall analyse this situation more fully in Remark \ref{Re8.5} below.

We are interested, after \cite{Bo2}, in when the Dress construction induces a family of endofunctors
on the category $\Grn$ of Green functors. That is to say, when is there a natural structure of Green functor
on $A_Y=D(Y,A)$ if $A$ is a Green functor? Since $A_Y$ is the composite
\[
\xymatrix@1{
\T \ar[r]^{-\ox Y} & \T \ar[r]^A & \Mod_k}
\]
with $A$ monoidal, what we require is that $-\ox Y$ should be monoidal (since monoidal
functors compose). For this we use Theorem 3.7 of \cite{DPS}:

\emph{ if $Y$ is a monoid in the lax centre $\Z_l(\T)$ of $\T$ then $-\ox Y: \T \ra \T$
is canonically monoidal.}

Let $\C$ be a monoidal category. The lax centre $\Z_l(\C)$ of $\C$ is defined to have objects the pairs
$(A, u)$ where $A$ is an object of $\C$ and $u$ is a natural family of morphisms
$u_B: A \ox B \ra B \ox A$ such that the following two diagrams commute
\[
\vcenter{\xygraph{
{A \ox B \ox C}="s"
(:[r(3.2)] {B \ox C \ox A}="t"^-{u_{B \ox C}},
:[d(1.2)r(1.6)] {B \ox A \ox C}_-{u_B \ox 1_C}
:"t" _-{1_B \ox u_C}
)}}
\qquad
\vcenter{\xygraph{
{A \ox I}="s"
(:[r(3.2)] {I \ox A}^-{u_I}
:[d(1.2)l(1.6)] {A ~.}="t"^-{\cong},
:"t" _-{\cong}
)}}
\]
Morphisms of $\Z_l(\C)$ are morphisms in $\C$ compatible with the $u$. The tensor product is
defined by
\[
(A,u) \ox (B,v) = (A \ox B, w)
\]
where $w_C=(u_C \ox 1_B) \o (1_A \ox v_C)$.
The centre $\Z(\C)$ of $\C$ consists of the objects $(A,u)$ of $\Z_l(\C)$ with each $u_B$
 invertible.

It is pointed out in \cite{DPS} that, when $\C$ is cartesian monoidal, an
object of $\Z_l(\C)$ is merely an object $A$ of $\C$ together with a natural family
$A \times X \ra X$. Then we have the natural bijections below (represented by horizontal lines)
for $\C$ cartesian closed:
\[
\infer{A \ra \displaystyle{\int_X [X, X] } ~ ~ \text{in} ~ \C.}{
\infer{A \ra [X, X] ~ ~ \text{dinatural in}~ X}
      {A \times X \ra X ~ ~ \text{natural in}~ X}}
\]
Therefore we obtain an equivalence
$\Z_l(\C) \simeq \C / \int_X [X, X] .$

The internal hom in $\E$, the category of finite $G$-sets for the finite group $G$,
is $[X,Y]$ which is the set of functions $r: X \ra Y$ with
$(g.r)(x)=gr(g^{-1}x)$. The $G$-set $\int_X [X, X]$ is defined by
\[
\int_X [X, X] = \biggl \lbrace r=(r_X: X \longrightarrow  X)  \quad \Big \rvert~ f \o r_X = r_Y \o f~
\text{for all}~ G \text{-maps}~ f: X \longrightarrow Y  \biggr \rbrace
 \]
with $(g.r)_X(x)=gr_X(g^{-1}x)$.

\begin{lemma}\label{5.1}
The $G$-set $\displaystyle \int_X [X, X]$ is isomorphic to $G_c$, which is
the set $G$ made a $G$-set by conjugation action.
\end{lemma}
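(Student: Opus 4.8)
The plan is to recognize $\int_X[X,X]$ as the object of natural endomorphisms of the underlying-set functor on finite $G$-sets, and then to compute that object by representability. First I would unwind the description of the end given just before the statement: an element $r = (r_X)$ consists of genuine \emph{functions} $r_X: X \to X$ (elements of the internal hom $[X,X]$, not $G$-maps) subject to $f \o r_X = r_Y \o f$ for every $G$-map $f: X \to Y$. This is exactly a natural transformation $r: U \Rightarrow U$, where $U: \E \to \Set$ is the forgetful functor; the prescribed action $(g.r)_X(x) = g\,r_X(g^{-1}x)$ is simply the $G$-structure carried by this hom-set. So the whole task is to identify $\mathrm{Nat}(U,U)$, together with this action, with $G_c$.

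Next I would produce the candidate bijection. To $g \in G$ I assign the family $r^g$ with $r^g_X(x) = g\cdot x$ (left translation on each $G$-set). This is a legitimate element of the end, since for a $G$-map $f$ one has $f(g\cdot x) = g\cdot f(x)$, which is precisely $f \o r^g_X = r^g_Y \o f$. For bijectivity I would use the representing object $G_\ell$, namely $G$ with its left-multiplication action, which lies in $\E$ because $G$ is finite. Given any $r \in \int_X[X,X]$, put $g := r_{G_\ell}(e)$. For each finite $G$-set $X$ and each $x \in X$ the orbit map $\hat x : G_\ell \to X$, $a \mapsto a\cdot x$, is a $G$-map, and naturality of $r$ along $\hat x$, evaluated at $e$, gives $r_X(x) = \hat x(g) = g\cdot x$. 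Hence $r = r^g$, and $g$ is recovered as $r_{G_\ell}(e)$, so $g \mapsto r^g$ is a bijection; this is just the Yoneda lemma applied to the isomorphism $U \cong \E(G_\ell, -)$.

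Finally I would transport the $G$-action across this bijection. Using the formula for the action on the end, $(h.r^g)_X(x) = h\cdot r^g_X(h^{-1}x) = h\cdot(g\cdot(h^{-1}x)) = (hgh^{-1})\cdot x = r^{hgh^{-1}}_X(x)$, so $h.r^g = r^{hgh^{-1}}$. Thus the bijection $g \mapsto r^g$ intertwines the action on the end with the conjugation action on $G$, which is exactly the definition of $G_c$. The only point demanding care is the representability step: one must note that the components $r_X$ are arbitrary functions, so that \emph{all} of $G$ appears rather than merely the centre (which is what would arise were the $r_X$ required to be $G$-maps), and that $G_\ell$ is available as a finite $G$-set. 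Everything else is a direct calculation.
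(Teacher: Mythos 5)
Your proof is correct and takes essentially the same route as the paper's: both arguments use naturality along the orbit maps $\hat{x}: G \to X$ (with $G$ carrying its left-multiplication action) to show that any $r$ in the end is determined by $r_G(1) \in G$, and both compute $(h.r)_G(1) = h\,r_G(1)\,h^{-1}$ to identify the induced action as conjugation. Your write-up is fuller in one respect --- you explicitly check that each left-translation family $r^g$ is natural, so that the map $r \mapsto r_G(1)$ is onto all of $G$ --- a point the paper leaves implicit.
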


\begin{proof}
Take $r \in \int_X [X,X]$. Then we have the commutative square
\[\xygraph{
{G}="a"
(:[r(2.2)] {G}^-{r_G}
:[d(1.2)] {X}="t"^-{\hat{x}},
:[d(1.2)] {X}_-{\hat{x}}
:"t"_-{r_X}
)}
\]
where $\hat{x}(g)=gx$ for $x \in X$.
So we see that $r_X$ is determined by $r_G(1)$ and
\begin{equation*}
\begin{split}
(g.r)_G(1) & = gr_G(g^{-1}1) \\
    & =  gr_G(g^{-1})\\
    & = gr_G(1)g^{-1}.
\end{split}
\end{equation*}
\end{proof}

As a consequence of this Lemma, we have $\Z_l(\E) \simeq \E / G_c$
where $\E / G_c$ is the category of crossed $G$-sets of Freyd-Yetter (\cite{FY1}, \cite{FY2})
who showed that $\E / G_c$ is a braided monoidal category. Objects are pairs $(X, |~|)$
where $X$ is a $G$-set
and $|~|: X \ra G_c$ is a $G$-set morphism (``equivariant function'')
meaning $|gx|=g|x|g^{-1}$ for $g \in G$, $x \in X$.
The morphisms $f: (X, |~|) \ra (Y, |~|)$ are functions $f$ such that the following diagram commutes.
\[ \xygraph{
{X}="s"
(:[r(2.8)] {Y}^-{f}
:[d(1.2)l(1.4)] {G_c}="t"^-{|~|},
:"t" _-{|~|}
)}
\]
That is, $f(gx)=gf(x)$.

Tensor product is defined by
\[
(X,|~|) \ox (Y,|~|) = (X \times Y, \Vert ~ \Vert),
\]
where $\Vert (x,y) \Vert = |x| |y|$.

\begin{proposition}\label{5.2} \cite[Theorem 4.5]{DPS}
The centre $\Z(\E)$ of the category $\E$ is equivalent to the category $\E / G_c$ of crossed $G$-sets.
\end{proposition}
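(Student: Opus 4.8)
The plan is to leverage what the discussion preceding the statement has already delivered. Lemma \ref{5.1} identifies $\int_X[X,X]$ with $G_c$, and combined with the reduction $\Z_l(\C)\simeq\C/\int_X[X,X]$ for cartesian closed $\C$ recalled from \cite{DPS}, this yields an equivalence $\Z_l(\E)\simeq\E/G_c$ between the \emph{lax} centre and the crossed $G$-sets. Since the centre $\Z(\E)$ is, by definition, the full subcategory of $\Z_l(\E)$ consisting of those $(A,u)$ for which every component $u_B$ is invertible, it is enough to show that in the present case this invertibility is automatic, so that $\Z(\E)=\Z_l(\E)$; the desired equivalence $\Z(\E)\simeq\E/G_c$ then follows at once.

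First I would make the equivalence $\Z_l(\E)\simeq\E/G_c$ explicit at the level of the structure map. Because $\E$ is cartesian monoidal, an object of $\Z_l(\E)$ amounts to an object $A$ equipped with a natural family $A\x X\to X$, the second component of $u_X:A\x X\to X\x A$ being forced to be the projection onto $A$ by naturality in $X$ together with the unit axiom. Tracing a crossed $G$-set $(A,|~|)$ through Lemma \ref{5.1}, the attached family assigns to $a\in A$ the map $x\mapsto|a|\,x$, so that
\[
u_X:A\x X\to X\x A,\qquad u_X(a,x)=(\,|a|\,x,\ a\,).
\]
As a consistency check one may verify that the lax-centre tensor $w_C=(u_C\x 1_B)\o(1_A\x v_C)$ sends $((a,b),c)$ to $(\,|a||b|\,c,\,(a,b)\,)$, recovering the crossed $G$-set rule $\Vert(a,b)\Vert=|a||b|$; this confirms the equivalence is compatible with the Freyd--Yetter monoidal structure, though only the underlying equivalence of categories is needed for the statement.

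The heart of the argument is then the invertibility of each $u_X$, and here the point is simply that $|a|$ lies in the group $G$ and so acts on the $G$-set $X$ as a bijection. Concretely, the equivariant map $(y,a)\mapsto(a,\,|a|^{-1}y)$ is a two-sided inverse of $u_X$, as a direct computation shows. Hence every object of the lax centre already belongs to the centre, giving $\Z(\E)=\Z_l(\E)$ and therefore $\Z(\E)\simeq\E/G_c$. I expect the genuine obstacle to be bookkeeping rather than substance: one must take care, using naturality and the unit axiom, to pin down that the second component of $u_X$ is the projection (the cartesian reduction of \cite{DPS}) and to track which variable carries the label $|~|$ through Lemma \ref{5.1}. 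Once the formula $u_X(a,x)=(|a|\,x,a)$ is secured, invertibility --- and with it the collapse of the centre onto the lax centre --- is immediate precisely because group elements act invertibly.
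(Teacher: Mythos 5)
Your proposal is correct and takes essentially the same route as the paper: both pass through the equivalence $\Z_l(\E)\simeq\E/G_c$ obtained from Lemma \ref{5.1}, and both rest on the observation that the structure map $u_X(a,x)=(|a|x,a)$ attached to a crossed $G$-set is invertible, with inverse $(x,a)\mapsto(a,|a|^{-1}x)$, precisely because each $|a|\in G$ acts bijectively. The only cosmetic difference is that the paper phrases the conclusion as essential surjectivity of the fully faithful comparison $\Z(\E)\ra\Z_l(\E)\simeq\E/G_c$, whereas you phrase it as the collapse $\Z(\E)=\Z_l(\E)$; the content is identical.
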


\begin{proof}
We have a fully faithful functor $\Z(\E) \ra \Z_l(\E)$ and so $\Z(\E) \ra \E / G_c$.
On the other hand, let $|~|: A \ra G_c$ be an object of $\E / G_c$; so
$|ga|g=g|a|$. Then the corresponding object of $\Z_l(\E)$ is $(A,u)$ where
\[
u_X: A \times X \ra X \times A
\]
with
\[
u_X(a,x) = (|a|x, a).
\]
However this $u$ is invertible since we see that
\[
{u_X}^{-1}(x, a) = (a, |a|^{-1}x).
\]
This proves the proposition.
\end{proof}

\begin{theorem} \cite[{Bo2}] {Bo3}
If $Y$ is a monoid in $\E / G_c$ and $A$ is a Green functor for $\E$ over $k$ then
$A_{Y}$ is a Green functor for $\E$ over $k$, where $A_{Y}(X)=A(X \times Y)$.
\end{theorem}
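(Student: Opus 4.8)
The plan is to realise $A_Y$ as a composite of monoidal functors, following the route laid out just before the statement. Since $A$ is a Green functor it is a monoidal functor $\Spn(\E) \ra \kMod$, and monoidal functors compose, so by Theorem 3.7 of \cite{DPS} it is enough to upgrade $Y$ from a monoid in $\E / G_c$ to a monoid in the lax centre $\Z_l(\Spn(\E))$: then $-\x Y : \Spn(\E) \ra \Spn(\E)$ is canonically monoidal and $A_Y = A \o (-\x Y)$ is monoidal, hence a Green functor. All the work is thus in transporting the crossed-$G$-set structure on $Y$ along the identity-on-objects strong monoidal functor $(-)_* : \E \ra \Spn(\E)$.

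By the identifications $\Z_l(\E) \simeq \E / G_c \simeq \Z(\E)$ (Proposition \ref{5.2}), the object $Y$ carries a natural family $u_X : Y \x X \ra X \x Y$ in $\E$ that is moreover \emph{invertible}, with $u_X(a,x)=(|a|x,a)$. I would set $\tilde u_X := (u_X)_*$ and verify that $(Y,\tilde u)$ is an object of $\Z_l(\Spn(\E))$. The point needing care is naturality of $\tilde u$ with respect to an arbitrary span. Every span $(s_1,S,s_2): X \ra X'$ factors as $(s_2)_* \o (s_1)^*$, so it suffices to check the naturality square on the generating families $f_*$ and $f^*$. For the covariant generators $f_*$ this is immediate from the $\E$-naturality of $u$ and functoriality of $(-)_*$.

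The contravariant generators $f^*$ are where I expect the main obstacle, and they are exactly the place where invertibility of $u$ is indispensable: for an isomorphism $h$ of $\E$ one has $h_* \cong (h^{-1})^*$ in $\Spn(\E)$, so applying the (contravariant, strong monoidal) functor $(-)^*$ to the $\E$-naturality square for $u$ and re-expressing each resulting $(u_X)^*$ as $(u_X^{-1})_*$ yields precisely the span-level naturality square for $\tilde u$. Because all $u_X$ are invertible here (this is why one must pass through the genuine centre $\Z(\E)$, not merely the lax centre of $\E$), this step goes through; the two lax-centre coherence diagrams and the condition over the unit $1$ then transport formally, since $(-)_*$ preserves $\x$ and $I=1$.

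Finally, $(-)_*$ being strong monoidal, the monoid data $\mu : Y \x Y \ra Y$ and $\eta : 1 \ra Y$ of $Y$ in $\E / G_c$ push forward to a monoid structure on $(Y,\tilde u)$ in $\Z_l(\Spn(\E))$, its compatibility with $\tilde u$ being the image of the corresponding law in $\E / G_c$. Theorem 3.7 of \cite{DPS} then endows $-\x Y$ with its monoidal structure, and composing with $A$ finishes the proof. One could instead verify directly that $A_Y$ is a monoid in $\Mky$, taking its multiplication to be $A$ applied to the composite $(1 \x \mu) \o (1 \x u_V \x 1) : U \x Y \x V \x Y \ra U \x V \x Y$ and its unit to be $\eta$ followed by $A$ applied to the unit $1 \ra Y$ of $Y$; but the conceptual route above isolates the single use of invertibility of $u$.
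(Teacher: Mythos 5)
Your proof is correct, but it decomposes the argument differently from the paper. Both proofs reduce the theorem to Theorem 3.7 of \cite{DPS} together with the fact that monoidal functors compose; the difference is where that theorem is applied. The paper works downstairs in $\E$: since $\Z(\E) \simeq \E/G_c$, the object $Y$ is a monoid in $\Z(\E)$, so $-\x Y : \E \ra \E$ is monoidal; since this functor also preserves pullbacks, the paper asserts that it induces a monoidal functor $-\x Y : \Spn(\E) \ra \Spn(\E)$, and then composes with $A$. You work upstairs in $\Spn(\E)$: you transport the invertible natural family $u$ along $(-)_*$, check that $(Y,u_*)$ is an object of $\Z_l(\Spn(\E))$ by verifying naturality on the generating morphisms $f_*$ and $f^*$, transport the monoid structure, and only then invoke Theorem 3.7 of \cite{DPS} with $\T = \Spn(\E)$. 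The same nontrivial fact is used in both routes, in different clothing: for the paper's step to go through, the naturality squares in $\E$ of the monoidal constraints of $-\x Y$ must be pullbacks (otherwise their images under $(-)_*$ fail to be natural with respect to the contravariant generators $f^*$), and this Beck--Chevalley-type condition holds precisely because $u$ is invertible, i.e.\ because $Y$ is genuinely central rather than merely lax-central. The paper leaves that point implicit in the phrase ``preserves pullbacks, hence monoidal on spans''; your proof makes it explicit in the equivalent form of $f^*$-naturality of $u_*$, derived by applying $(-)^*$ to the naturality squares of $u$ and using $h^* = (h^{-1})_*$ for invertible $h$. So your route is somewhat longer, but it isolates and fills in exactly the detail the paper's terser argument glosses over.
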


\begin{proof}
We have $\Z(\E) \simeq \E / G_c $, so $Y$ is a monoid
in $\Z(\E)$. This implies $- \times Y : \E \ra \E$ is a monoidal functor (see Theorem 3.7
of \cite{DPS}). It also preserves pullbacks. So $- \times Y : \Spn(\E) \ra \Spn(\E)$ is a
monoidal functor . If $A$ is a
Green functor for $\E$ over $k$ then $A: \Spn(\E) \ra \kMod$ is monoidal. Then we get
$A_{Y} = A \o (- \times Y): \Spn(\E) \ra \kMod$ is monoidal and
$A_{Y}$ is indeed a Green functor for $\E$ over $k$.
\end{proof}

\begin{remark} \label{Re8.5}
The reader may have noted that Proposition \ref{Pro8.1} implies that $D$ takes monoids to monoids.
A monoid in $\T \ox \Mky$ is a pair $(Y,A)$ where $Y$ is a monoid in $\T$ and $A$ is a Green functor;
so in this case, we have that $A_Y$ is a Green functor. A monoid $Y$ in $\E$ is certainly a monoid in $\T$.
Since $\E$ is cartesian monoidal (and so symmetric), each monoid in $\E$ gives one in the centre.
However, not every monoid in the centre arises in this way. The full result behind Proposition
\ref{Pro8.1} and the centre situation is: the Dress construction
\[
D : \Z(\T) \ox \Mky \ra \Mky
\]
is a strong monoidal $\V$ -functor; it is merely monoidal when the centre is replaced by the lax centre.

It follows that $A_Y$ is a Green functor whenever $A$ is a Green functor and $Y$ is a monoid in the lax
centre of $\T$.
\end{remark}


\section{Finite dimensional Mackey functors} \label{Se9}

We make the following further assumptions on the symmetric compact closed category $\T$ with
finite direct sums:
\begin{itemize}
\item there is a finite set $\C$ of objects of $\T$ such that every object $X$ of $\T$ can be
written as a direct sum
\[
X \cong  \bigoplus_{i=1}^{n} C_i
\]
with $C_i$ in $\C$; and

\item each hom $\T(X,Y)$ is a finitely generated commutative monoid.
\end{itemize}

Notice that these assumptions hold when $\T=\Spn(\E)$ where $\E$ is the category of
finite $G$-sets for a finite group $G$. In this case we can take $\C$ to consist of a representative set of connected (transitive) $G$-sets. Moreover, the spans $S: X \ra Y$ with $S \in \C$ generate the
monoid $\T(X,Y)$.

We also fix $k$ to be a field and write $\Vect$ in place of $\Mod_k$.

A Mackey functor $M: \T \ra \Vect$ is called \emph{finite dimensional} when each $M(X)$
is a finite-dimensional vector space. Write $\Mkyf$ for the full subcategory of $\Mky$ consisting of these.

We regard $\C$ as a full subcategory of $\T$. The inclusion functor $\C \ra \T$ is dense and the
density colimit presentation is preserved by all additive $M: \T \ra \Vect$. This is shown as follows:

\begin{align*}
\int^C \T(C,X) \ox M(C) & \cong \int^C \T(C,\bigoplus_{i=1}^{n} C_i) \ox M(C) \\
    & \cong \bigoplus_{i=1}^{n} \int^C \T(C,C_i) \ox M(C) \\
    & \cong \bigoplus_{i=1}^{n} \int^C \C(C,C_i) \ox M(C) \\
    & \cong \bigoplus_{i=1}^{n} M(C_i) \\
    & \cong  M(\bigoplus_{i=1}^{n} C_i) \\
    &\cong M(X).
\end{align*}
That is,
\[
M \cong \int^C \T(C,-) \ox M(C).
\]

\begin{proposition}
The tensor product of finite-dimensional Mackey functors is
finite dimensional.
\end{proposition}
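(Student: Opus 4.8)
The plan is to evaluate $M*N$ at an arbitrary object $Z$ and prove the vector space $(M*N)(Z)$ is finite dimensional. The strategy has two stages: first cut the defining coend down to one indexed by the \emph{finite} dense subcategory $\C$, and then read off finiteness from the resulting coequalizer description.

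For the first stage I would start from the expression obtained in Section~\ref{Se5},
\[
(M*N)(Z) \cong \int^X M(X) \ox_k N(X^* \ox Z),
\]
and substitute the density presentation $M(X) \cong \int^C \T(C,X) \ox M(C)$ established just above the statement. Using that $\ox_k$ and the copowers by commutative monoids are cocontinuous in each variable (so the two coends may be exchanged by Fubini and the factor $M(C)\ox_k(-)$ pulled through), the inner coend becomes an instance of the enriched co-Yoneda lemma,
\[
\int^X \T(C,X) \ox N(X^* \ox Z) \cong N(C^* \ox Z),
\]
applied to the contravariant functor $X \mapsto N(X^* \ox Z)$ on $\T^{\op}$. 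This collapses everything to a coend over $\C$ alone:
\[
(M*N)(Z) \cong \int^{C} M(C) \ox_k N(C^* \ox Z).
\]

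For the second stage I would invoke finiteness of $\C$. By the definition of the $\V$-coend recalled in Section~\ref{Se5}, the object $\int^{C} M(C)\ox_k N(C^* \ox Z)$ is the coequalizer in $\Vect$ of a pair of maps whose codomain is the direct sum $\bigoplus_{C \in \C} M(C) \ox_k N(C^* \ox Z)$; in particular it is a \emph{quotient} of that direct sum. Since $\C$ is finite this is a finite direct sum, and each summand is a tensor product over $k$ of the finite-dimensional spaces $M(C)$ and $N(C^*\ox Z)$, hence finite dimensional. A finite direct sum of finite-dimensional spaces is finite dimensional, and so is any quotient of it, giving the claim.

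The main obstacle is the reduction in the first stage: one must check that the coend ranging over all of $\T$ genuinely collapses to the finite coend over $\C$. This rests on the density of $\C$ together with the preservation of the density colimit presentation by additive functors (established just before the statement), and on the cocontinuity of the copower and of $\ox_k$ in the $\V$-enriched setting. Once this is in hand the finiteness conclusion is immediate. It is worth noting that the co-Yoneda step is what makes the argument clean: it eliminates the copowers $\T(C,X)\ox(-)$ by the merely finitely generated commutative monoids $\T(C,X)$, so those never have to be analysed for finite dimensionality, and only the ordinary $k$-linear tensor products survive in the final coend.
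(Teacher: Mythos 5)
Your proof is correct, and although it shares the paper's overall skeleton — use the density presentation over the finite subcategory $\C$, collapse by co-Yoneda, then observe that a coend over a finite index is a quotient of a finite direct sum — the decomposition you choose is genuinely different, and the difference matters. The paper starts from the two-variable defining coend $(M*N)(Z)=\int^{X,Y}\T(X\ox Y,Z)\ox M(X)\ox_k N(Y)$ and substitutes the density presentation for \emph{both} $M$ and $N$, arriving at $\int^{C,D}\T(C\ox D,Z)\ox M(C)\ox_k N(D)$; the integrand there still carries the copower by the commutative monoid $\T(C\ox D,Z)$, so the paper must invoke its second standing assumption — that each hom of $\T$ is a finitely generated commutative monoid — to see that each summand is finite dimensional. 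You instead start from the one-variable form $(M*N)(Z)\cong\int^X M(X)\ox_k N(X^*\ox Z)$, expand only $M$, and let co-Yoneda absorb the hom-copower into $N$, landing on $\int^C M(C)\ox_k N(C^*\ox Z)$, whose integrand is a plain tensor product of finite-dimensional spaces. As you point out, this eliminates the hom-monoids from the picture entirely: your argument uses only the first assumption (every object is a finite direct sum of objects from the finite set $\C$) together with finite-dimensionality of $M$ and $N$, so it establishes the proposition under strictly weaker hypotheses. (The finitely-generated-homs assumption is still wanted elsewhere in Section \ref{Se9}, e.g.\ for the Burnside functor $J$ to be finite dimensional, but your proof shows it is not needed here.) The enriched Fubini and co-Yoneda manipulations you rely on are exactly the kind the paper itself uses freely, for instance in the proof of Proposition \ref{Pro8.1}, so nothing extra needs to be justified; the only trade-off is that the paper's symmetric formula treats $M$ and $N$ on an equal footing, whereas yours exploits the compact-closed structure asymmetrically.
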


\begin{proof}
Using the last isomorphism, we have
\begin{equation*}
\begin{split}
(M * N)(Z) & = \int^{X,Y} \T(X \ox Y, Z) \ox M(X) \ox_k N(Y) \\
& \cong \int^{X,Y,C,D} \T(X \ox Y,Z) \ox \T(C,X) \ox \T(D,Y) \ox M(C) \ox_k N(D) \\
&  \cong \int^{C,D} \T(C \ox D,Z) \ox M(C) \ox_k N(D) .
\end{split}
\end{equation*}

If $M$ and $N$ are finite dimensional then so is the vector space $\T(C\ox D,Z)\ox M(C) \ox_k N(D)$
(since $\T(C \ox D,Z)$ is finitely generated). Also the coend is a quotient of a finite direct sum.
So $M*N$ is finite dimensional.
\end{proof}

It follows that $\Mkyf$ is a monoidal subcategory of $\Mky$ (since the Burnside functor $J$ is
certainly finite dimensional under our assumptions on $\T$).

The promonoidal structure on $\Mkyf$ represented by this monoidal structure can be expressed in many ways:
\begin{equation*}
\begin{split}
P(M,N;L) & = \Mkyf(M*N, L) \\
    & \cong \text{Nat}_{X,Y,Z} (\T(X \ox Y, Z) \ox M(X) \ox_k N(Y),L(Z)) \\
    & \cong \text{Nat}_{X,Y} (M(X) \ox_k N(Y),L(X \ox Y)) \\
    & \cong \text{Nat}_{X,Z} (M(X) \ox_k N(X^* \ox Z),L(Z)) \\
    & \cong \text{Nat}_{Y,Z} (M(Z \ox Y^*) \ox_k N(Y),L(Z)).
\end{split}
\end{equation*}

Following the terminology of \cite{DS}, we say that a promonoidal category $\M$ is \emph{$*$-autonomous}
when it is equipped with an equivalence $S: \M^{\text{op}} \ra \M$ and a natural isomorphism
\[
P(M, N; S(L)) \cong P(N, L; S^{-1}(M)).
\]
A monoidal category is $*$-autonomous when the associated promonoidal category is.

As an application of the work of Day \cite{Da4} we obtain that $\Mkyf$ is $*$-autonomous. We shall
give the details.

For $M \in \Mkyf$, define $S(M)(X)=M(X^*)^*$ so that $S: \Mkyf^{\text{op}} \ra \Mkyf$ is its own inverse
equivalence.

\begin{theorem}
The monoidal category $\Mkyf$ of finite-dimensional Mackey functors on $\T$
is $*$-autonomous.
\end{theorem}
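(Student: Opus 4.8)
The plan is to exhibit explicitly the data required by the definition of $*$-autonomy recalled above, namely the equivalence $S$ already introduced, $S(M)(X)=M(X^*)^*$, together with a natural isomorphism $P(M,N;S(L))\cong P(N,L;S^{-1}(M))$, and to defer the coherence to Day's machinery of \cite{Da4}. The strategy is to rewrite \emph{both} sides as one and the same symmetric trilinear form, so that the isomorphism becomes a transposition of arguments.

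First I would record that $S$ is a well-defined self-inverse equivalence $S\colon\Mkyf^{\op}\to\Mkyf$. Since $(-)^*\colon\T^{\op}\to\T$ and linear dualization both preserve finite direct sums and finite dimension, $S(M)$ is again an additive (coproduct-preserving) finite-dimensional Mackey functor, and $S$ is contravariantly functorial. Because $X^{**}\cong X$ in the compact closed $\T$ and $V^{**}\cong V$ for a finite-dimensional vector space $V$, one gets $S^2\cong\mathrm{id}$, so $S^{-1}\cong S$ and $P(N,L;S^{-1}(M))=P(N,L;S(M))$.

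The heart of the argument is the following chain. Starting from the promonoidal expression $P(M,N;L)\cong\text{Nat}_{X,Y}(M(X)\ox_k N(Y),L(X\ox Y))$ and substituting $S(L)(X\ox Y)=L((X\ox Y)^*)^*$, I would use finite-dimensionality to convert $\text{Hom}_k(A,B^*)$ into $\text{Hom}_k(A\ox_k B,k)$, giving
\[
P(M,N;S(L))\cong\text{Nat}_{X,Y}\big(M(X)\ox_k N(Y)\ox_k L((X\ox Y)^*),\,k\big).
\]
Inserting the co-Yoneda presentation $L((X\ox Y)^*)\cong\int^Z\T(Z,(X\ox Y)^*)\ox L(Z)$, then pulling the coend through $\ox_k$ and through $\text{Hom}_k(-,k)$ (where it becomes an end, i.e.\ an extra naturality variable), yields
\[
P(M,N;S(L))\cong\text{Nat}_{X,Y,Z}\big(\T(Z,(X\ox Y)^*)\ox M(X)\ox_k N(Y)\ox_k L(Z),\,k\big).
\]
Finally, compact closedness gives $\T(Z,(X\ox Y)^*)\cong\T(X\ox Y\ox Z,I)$, which, since $\T$ is symmetric, is invariant under all permutations of $X,Y,Z$. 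Setting
\[
Q(M,N,L):=\text{Nat}_{X,Y,Z}\big(\T(X\ox Y\ox Z,I)\ox M(X)\ox_k N(Y)\ox_k L(Z),\,k\big),
\]
we obtain $P(M,N;S(L))\cong Q(M,N,L)$ with $Q$ manifestly symmetric in the three pairs $(M,X),(N,Y),(L,Z)$. Running the identical computation on $(N,L;S(M))$ gives $P(N,L;S^{-1}(M))=P(N,L;S(M))\cong Q(N,L,M)\cong Q(M,N,L)$, and composing produces the required isomorphism.

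I expect the main obstacle to be bookkeeping rather than conceptual: one must track the variances and the (co)end--$\text{Hom}_k$ interchange carefully so that the symmetry of $Q$ is genuine and compatible with the symmetry constraints of $\T$, and one must confirm that the resulting natural isomorphism satisfies the coherence demanded of a $*$-autonomous structure. This last point is exactly where \cite{Da4} enters: instead of checking pentagon-type coherence by hand, I would invoke Day's characterization, for which the symmetric form $Q$ -- equivalently the dualizing data encoded by $S$ and by $\T(-\ox-\ox-,I)$ -- is precisely the required input.
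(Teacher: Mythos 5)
Your proof is correct, but it takes a genuinely different route from the paper's. The paper argues directly at the level of two-variable natural transformations: it substitutes $S(L)(X\ox Y)=L(X^*\ox Y^*)^*$ into $P(M,N;L)\cong\text{Nat}_{X,Y}(M(X)\ox_k N(Y),L(X\ox Y))$, transposes the linear duality to move $M(X)^*$ into the codomain,
\[
\text{Nat}_{X,Y}\bigl(M(X)\ox_k N(Y),L(X^*\ox Y^*)^*\bigr)\cong\text{Nat}_{X,Y}\bigl(N(Y)\ox_k L(X^*\ox Y^*),M(X)^*\bigr),
\]
reindexes $X=Z^*$ (legitimate since $(-)^*$ is an equivalence of $\T$ with $\T^{\op}$), and recognizes the result as $P(N,L;S(M))$ via one of the alternative expressions for the promonoidal structure listed just before the theorem. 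Your symmetric trilinear form $Q(M,N,L)$ is implicitly the common value through which the paper's computation also passes---the paper's transposition step is exactly the isomorphism $\Hom_k(A\ox_k B,C^*)\cong\Hom_k(B\ox_k C,A^*)$, both sides being identified with $\Hom_k(A\ox_k B\ox_k C,k)$---but you make it explicit, and thereby exhibit the full symmetry of the situation at once: $P(M,N;S(L))\cong Q(M,N,L)$ with $Q$ invariant under simultaneous permutation of $(M,X),(N,Y),(L,Z)$, so every cyclic variant of the $*$-autonomy isomorphism comes for free. The cost is the extra co-Yoneda expansion and the coend--end bookkeeping, which the paper's shorter chain avoids entirely. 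Two small remarks. First, the conversion $\Hom_k(A,B^*)\cong\Hom_k(A\ox_k B,k)$ is just the tensor--hom adjunction and needs no finite-dimensionality; that hypothesis is needed precisely where you do also use it, namely for $S^2\cong\mathrm{id}$ via $V^{**}\cong V$ and for $S$ to take values in $\Mkyf$. Second, your closing worry about further coherence is moot relative to the paper's definition, which (following \cite{DS}) asks only for the equivalence $S$ together with the natural isomorphism $P(M,N;S(L))\cong P(N,L;S^{-1}(M))$; the paper's own proof likewise verifies nothing beyond the chain of natural isomorphisms, so no appeal to \cite{Da4} beyond what you already have is required.
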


\begin{proof}
With $S$ defined as above, we have the calculation:
\begin{equation*}
\begin{split}
P(M,N;S(L)) & \cong \text{Nat}_{X,Y} (M(X) \ox_k N(Y),L(X^* \ox Y^*)^*) \\
    & \cong \text{Nat}_{X,Y} (N(Y) \ox_k L(X^* \ox Y^*), M(X)^*) \\
    & \cong \text{Nat}_{Z,Y} (N(Y) \ox_k L(Z \ox Y^*), M(Z^*)^*) \\
    & \cong \text{Nat}_{Z,Y} (N(Y) \ox_k L(Z \ox Y^*), S(M)(Z)) \\
    &\cong P(N,L;S(M)).
\end{split}
\end{equation*}
\end{proof}

\section{Cohomological Mackey functors} \label{Se10}

Let $k$ be a field and $G$ be a finite group. We are interested in the relationship between ordinary
$k$-linear representations of $G$ and Mackey functors on $G$.

Write $\E$ for the category of finite $G$-sets as usual. Write $\r$ for the category $\Rep_k(G)$
of finite -dimensional $k$-linear representations of $G$.

Each $G$-set $X$ determines a $k$-linear representation $kX$ of $G$ by extending the action of $G$
linearly on $X$. This gives a functor
\[
k: \E \ra \r.
\]
We extend this to a functor
\[
k_* : \T^{\text{op}} \ra \r,
\]
where $\T= \Spn(\E)$, as follows. On objects $X \in \T$, define
\[
k_*X=kX.
\]
For a span $(u,S,v): X \ra Y$ in $\E$, the linear function $k_*(S):kY \ra kX$ is defined by
\[
k_*(S)(y) = \sum_{v(s)=y} u(s) ~;
\]
this preserves the $G$-actions since
\[
k_*(S)(gy) = \sum_{v(s)=gy} u(s) = \sum_{v(g^{-1}s)=y} gu(g^{-1}s) ~ = ~ gk_*(S)(y).
\]
Clearly $k_*$ preserves coproducts.

By the usual argument (going back to Kan, and the geometric realization and singular functor adjunction),
we obtain a functor
\[
\widetilde{k_*}: \r \ra \Mky(G)_{\textit{fin}}
\]
defined by
\[
\widetilde{k_*}(R)= \r(k_*-,R)
\]
which we shall write as $R^{-}: \T \ra \Vect_k$. So
\[
R^X= \r(k_*X,R) \cong \GSet(X,R)
\]
with the effect on the span $(u,S,v): X \ra Y$ transporting to the linear function
\[
\GSet(X,R) \ra \GSet(Y,R)
\]
which takes $\tau: X \ra R$ to $\tau_S: Y \ra R$ where
\[
\tau_S(y) = \sum_{v(s)=y} \tau(u(s)).
\]

The functor $\widetilde{k_*}$ has a left adjoint
\[
\textit{colim}(-,k_*): \Mky(G)_{\textit{fin}} \ra \r
\]
defined by
\[
\textit{colim}(M,k_*) = \int^C M(C) \ox_k k_*C
\]
where $C$ runs over a full subcategory $\C$ of $\T$ consisting of a representative set of connected
$G$-sets.

\begin{proposition}
The functor $\widetilde{k_*}: \Rep_k(G) \ra \Mky(G)$ is fully faithful.
\end{proposition}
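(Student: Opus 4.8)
The plan is to prove full faithfulness by showing directly that the map
\[
\r(R,R') \ra \Mky(G)(R^-, R'^-), \qquad f \mapsto \r(k_*-,f),
\]
which sends a $kG$-linear map to post-composition, is a bijection, and I would do this by exhibiting an explicit inverse given by evaluation at the regular $G$-set $G=G/1\in\C$. First I would record the identification $R^{G/1}=\r(k_*(G/1),R)=\r(kG,R)\cong R$, together with the observation that the $G$-set automorphisms of $G/1$ (the right translations, a copy of $G$ sitting inside $\E\subseteq\T$) act on $R^{G/1}$ so as to recover precisely the original $kG$-module structure of $R$. This is the standard identification of the self-maps of the regular permutation module with $kG$.

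Given a morphism $\theta\colon R^-\Rightarrow R'^-$ in $\Mky(G)$, its component $\theta_{G/1}\colon R\ra R'$ is then forced to be $kG$-linear, by naturality of $\theta$ with respect to the automorphisms of $G/1$ just described; so $\theta\mapsto\theta_{G/1}$ defines a map $\Mky(G)(R^-,R'^-)\ra\r(R,R')$. One composite is immediate: for $f\in\r(R,R')$ the $G/1$-component of $\r(k_*-,f)$ is post-composition with $f$ under $\r(kG,-)\cong\mathrm{id}$, namely $f$ itself, so $\theta\mapsto\theta_{G/1}$ is a retraction of the map in question.

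The substance is the reverse identity, namely that a morphism $\theta$ is determined by the single component $\theta_{G/1}$. Since every object of $\T$ is a direct sum of transitive $G$-sets $G/H$ and $\theta$ is an additive natural transformation, it suffices to pin down each $\theta_{G/H}$. For this I would use the span $\pi^*\colon G/H\ra G/1$ obtained from the projection $\pi\colon G\ra G/H$ under $(-)^*\colon\E^{\op}\ra\Spn(\E)$. A direct application of the defining formula for $k_*$ shows that $k_*(\pi^*)\colon kG\ra k[G/H]$ is the canonical surjection, whence $R^-(\pi^*)=\r(k_*(\pi^*),R)$ — precomposition with an epimorphism — is a \emph{monomorphism}, and likewise for $R'$. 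The naturality square of $\theta$ at $\pi^*$ reads $R'^-(\pi^*)\o\theta_{G/H}=\theta_{G/1}\o R^-(\pi^*)$, and monicity of $R'^-(\pi^*)$ forces $\theta_{G/H}$ to be the unique map through which this factors; thus $\theta_{G/H}$ depends only on $\theta_{G/1}$. Comparing $\theta$ with $\r(k_*-,\theta_{G/1})$, which has the same $G/1$-component and whose $G/H$-components are determined in exactly the same way, yields $\theta=\r(k_*-,\theta_{G/1})$, and the two maps are mutually inverse.

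The one point requiring care — and the reason to prefer restriction over transfer — is the choice of a \emph{natural monomorphism} $R^{G/H}\ra R^{G/1}$. The transfer span $\pi_*$ would instead induce the relative trace $R\ra R^H$, which need not be surjective in the modular case; it is the restriction span $\pi^*$ that supplies the inclusion $R^H\hookrightarrow R$, and it is the monicity of $R'^-(\pi^*)$, not any surjectivity, that drives the determination of $\theta_{G/H}$ by $\theta_{G/1}$. This is the step I expect to be the crux of the argument.
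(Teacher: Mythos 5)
Your proof is correct and takes essentially the same route as the paper's: both recover $f$ from the component of $\theta$ at the regular $G$-set (the paper via $f(r)=\theta_G(\hat{r})(1)$, you via $R^{G/1}\cong R$), obtain $G$-equivariance from naturality with respect to the translation automorphisms of $G$ (which is exactly what the paper's appeal to density of $G$ in $\GSet$ and Yoneda amounts to), and then propagate to all objects using naturality at the spans $u^*=(u,G,1_G)\colon X \ra G$. The only organizational difference is that the paper evaluates pointwise at every $G$-map $\hat{x}\colon G \ra X$ for arbitrary $X$, whereas you first reduce to transitive $G$-sets by additivity and then use injectivity of restriction along the projection $G \ra G/H$ --- the same naturality squares, packaged slightly differently.
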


\begin{proof}
For $R_1, R_2 \in \r$, a morphism $\theta: R_1^- \ra R_2^-$ in $\Mky(G)$ is a family of linear functions
$\theta_X$ such that the following square commutes for all spans $(u,S,v): X \ra Y$ in $\E$.

\[\xygraph{
{\GSet(X,R_1)}
(:[r(3)] {\GSet(X,R_2)}^-{\theta_X}
:[d(1.4)] {\GSet(Y,R_2)}="t"^-{(~-~)_S},
:[d(1.4)] {\GSet(Y,R_1)}_-{(~-~)_S}
:"t"_-{\theta_Y}
)}
\]
Since $G$ (with multiplication action) forms a full dense subcategory of $\GSet$, it follows that we obtain a unique morphism $f: R_1 \ra R_2$ in $\GSet$ such that
\[
f(r)= \theta_G(\hat{r})(1)
\]
(where $\hat{r}: G \ra R$ is defined by $\hat{r}(g)=gr$ for $r \in R$); this is a special case of Yoneda's Lemma.
Clearly $f$ is linear since $\theta_G$ is. By taking $Y=G, S=G$ and $v=1_G: G \ra G$, commutativity of the above square yields
\[
\theta_X(\tau)(x) = f(\tau(x));
\]
that is, $\theta_X = \widetilde{k_*}(f)_X$.
\end{proof}

An important property of Mackey functors in the image of $\widetilde{k_*}$ is that they are
\emph{cohomological} in the sense of \cite{We}, \cite{Bo4} and \cite{TW}. First we recall some classical terminology associated with a Mackey functor $M$ on a group $G$.

For subgroups $K \leq H$ of $G$, we have the canonical $G$-set morphism $\sigma^H_K: G/K \ra G/H$
defined on the connected $G$-sets of left cosets by $\sigma^H_K(gK)=gH$. The linear functions
\begin{gather*}
r^H_K=M_*(\sigma^H_K): M(G/H) \ra M(G/K) \quad \text{and} \\
t^H_K=M^*(\sigma^H_K): M(G/K) \ra M(G/H) \qquad \quad
\end{gather*}
are called \emph{restriction} and \emph{transfer} (or \emph{trace} or \emph{induction}).

A Mackey functor $M$ on $G$ is called \emph{cohomological} when each composite
$t^H_K r^H_K: M(G/H)$ $ \ra M(G/H)$ is equal to multiplication by the index $[H:K]$ of $K$ in $H$.
We supply a proof of the following known example.

\begin{proposition}
For each $k$-linear representation $R$ of $G$, the Mackey functor
$\widetilde{k_*}(R)=R^-$ is cohomological.
\end{proposition}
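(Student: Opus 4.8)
The plan is to make the statement concrete through the natural identification $R^{G/H} = \GSet(G/H, R) \cong R^H$, where $R^H$ denotes the subspace of $H$-fixed vectors and a $G$-map $\tau\colon G/H \to R$ corresponds to its value $\tau(eH)$; this value is $H$-fixed because $\tau(hH) = h\,\tau(eH)$ while $hH = eH$ for $h \in H$. Since $K \le H$, every $H$-fixed vector is $K$-fixed, so this identification exhibits both $r^H_K$ and $t^H_K$ as maps between the explicit spaces $R^H$ and $R^K$, reducing the whole claim to a one-line computation of their composite.

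First I would describe restriction as the value of $R^-$ on the span $(\sigma^H_K, G/K, 1_{G/K})\colon G/H \to G/K$. By the formula $\tau \mapsto \tau_S$ with $\tau_S(y) = \sum_{v(s)=y}\tau(u(s))$, this is precomposition $\tau \mapsto \tau \circ \sigma^H_K$, whose value at $eK$ is $\tau(\sigma^H_K(eK)) = \tau(eH)$. Under the identification above, $r^H_K\colon R^H \to R^K$ is therefore simply the inclusion $R^H \hookrightarrow R^K$. Next I would describe transfer as the value of $R^-$ on the span $(1_{G/K}, G/K, \sigma^H_K)\colon G/K \to G/H$. The same formula gives, for $\rho\colon G/K \to R$, the map $\rho_S(gH) = \sum_{s'K \subseteq gH}\rho(s'K)$, where the sum runs over the $[H:K]$ cosets of $K$ contained in $gH$. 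Translating through $\rho \leftrightarrow \rho(eK) = y$ and $\rho(s'K) = s'y$, the transfer becomes the relative trace $t^H_K(y) = \sum_{s'K \in H/K} s'y$; one checks the result lies in $R^H$ by reindexing the sum by $hs'$ for $h \in H$.

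The conclusion is then immediate. Since $r^H_K$ and $t^H_K$ are each a single value of the functor $R^-$, their composite is just the composite of linear maps (no Mackey square is needed). For $x \in R^H$ we have $r^H_K(x) = x$ viewed in $R^K$, and since $x$ is fixed by every $s' \in H$,
\[
t^H_K\, r^H_K(x) = \sum_{s'K \in H/K} s'x = \sum_{s'K \in H/K} x = [H:K]\,x .
\]
Hence $t^H_K r^H_K$ is multiplication by $[H:K]$ on $R^{G/H}$, so $R^-$ is cohomological.

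I expect the only delicate point to be the transfer step: verifying that the fiber of $\sigma^H_K$ over $gH$ is exactly the set of $K$-cosets inside $gH$, and that, after passing to fixed points, the resulting sum is genuinely $H$-invariant. Once restriction and transfer are identified as an inclusion and a relative trace respectively, the composite is forced by the $H$-invariance of $x$, and the index $[H:K]$ appears simply as the number of summands.
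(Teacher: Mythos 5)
Your proof is correct, and it takes a genuinely (if mildly) different route from the paper's. You transport everything through the classical identification $R^{G/H}=\GSet(G/H,R)\cong R^H$ by evaluation at $eH$, and then compute the two spans separately: restriction $R^-(\sigma^H_K,G/K,1)$ becomes the inclusion $R^H\hookrightarrow R^K$ and transfer $R^-(1,G/K,\sigma^H_K)$ becomes the relative trace $y\mapsto\sum_{s'K\in H/K}s'y$, after which the composite is forced by $H$-invariance of $x$. The paper never passes to fixed points: it first composes the two spans inside $\Spn(\E)$, using functoriality of $R^-$, to obtain the single span $(\sigma,G/K,\sigma):G/H\ra G/H$, and then applies the defining formula once, $\tau_{G/K}(gH)=\sum_{\sigma(s)=gH}\tau(\sigma(s))=[H:K]\,\tau(gH)$, the count $[H:K]$ being the size of the fibre of $\sigma$ --- exactly the same count that appears as the number of summands in your relative trace. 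Your version is the standard representation-theoretic picture; it buys transparency (restriction is literally an inclusion, transfer literally a trace, so the cohomological identity is visible at a glance) at the cost of setting up and checking the fixed-point dictionary, including well-definedness and $H$-invariance of the trace. The paper's version is more economical and stays entirely inside the span calculus, needing neither the identification $R^{G/H}\cong R^H$ nor explicit descriptions of the two maps. One incidental point in your favour: the paper's statement of the conventions labels $r^H_K=M_*(\sigma^H_K)$ and $t^H_K=M^*(\sigma^H_K)$, which conflicts with the displayed domains and codomains given the definitions of $(-)_*$ and $(-)^*$ in Section 4; your assignment of the spans $(\sigma,G/K,1)$ to restriction and $(1,G/K,\sigma)$ to transfer is the one consistent with those domains and codomains, so your argument is unaffected by that slip.
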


\begin{proof}
With $M=R^-$ and $\sigma=\sigma^H_K$, notice that the function
\[
t^H_Kr^H_K = M^*(\sigma)M_*(\sigma) = M(\sigma, G/K,1)M(1,G/K, \sigma)
=M(\sigma, G/K, \sigma)
\]
takes $\tau \in \E(G/H,R)$ to $\tau_{G/K} \in \E(G/H,R)$ where
\[
\tau_{G/K}(H) = \sum_{\sigma(s)=H} \tau(\sigma(s)) = \sum_{\sigma(s)=H} \tau(H)
= \quad (\sum_{\sigma(s)=H} 1) \tau(H)
\]
and $s$ runs over the distinct $gK$ with $\sigma(s)=gH=H$;  the number of distinct $gK$
with $g \in H$ is of course $[H:K]$. So $\tau_{G/K}(xH)=[H:K] \tau(xH)$.
\end{proof}

\begin{lemma} \label{10.3}
The functor $k_*: \T^{\text{op}} \ra \r$ is strong monoidal.
\end{lemma}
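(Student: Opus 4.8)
The plan is to write down the strong monoidal comparison explicitly, check its $G$-equivariance, and then verify naturality and coherence, the first of which is the only real work. The tensor product of $\T^{\op}$ agrees on objects with that of $\T=\Spn(\E)$, namely the cartesian product $\times$ of $G$-sets with unit the one-point $G$-set $1$, while $\r$ carries the representation tensor product $\ox_k$ (diagonal action) with unit the trivial representation $k$. Since $k_*$ is $X \mapsto kX$ on objects, the natural candidate for the comparison is the canonical isomorphism of $k$-modules
\[
\phi_{X,Y}\colon kX \ox_k kY \longrightarrow k(X \times Y), \qquad x \ox y \longmapsto (x,y),
\]
together with the unit comparison $k \to k_*(1)=k$, which is the identity because $1$ is a singleton carrying the trivial action. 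The first thing I would record is that $\phi_{X,Y}$ lives in $\r$: the diagonal action $g\cdot(x \ox y)=gx \ox gy$ on the source matches the action $g\cdot(x,y)=(gx,gy)$ on the product $G$-set, so $\phi_{X,Y}$ is an isomorphism of representations.

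The substantive step is the naturality of $\phi$ with respect to the morphisms of $\T^{\op}$, that is, with respect to spans. Given spans $(u,S,v)\colon X \to X'$ and $(u',T,v')\colon Y \to Y'$ in $\E$, their tensor in $\T$ is the product span $(u\times u',\, S \times T,\, v \times v')\colon X \times Y \to X' \times Y'$. I would then trace a generator $x' \ox y'$ of $kX' \ox_k kY'$ around the naturality square. The path through $k_*(u,S,v) \ox k_*(u',T,v')$ produces $\bigl(\sum_{v(s)=x'}u(s)\bigr)\ox\bigl(\sum_{v'(t)=y'}u'(t)\bigr)$, which $\phi_{X,Y}$ carries to $\sum_{v(s)=x',\,v'(t)=y'}(u(s),u'(t))$; the path through $\phi_{X',Y'}$ followed by $k_*$ of the product span produces $\sum_{(v\times v')(s,t)=(x',y')}(u(s),u'(t))$. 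These two expressions coincide precisely because the fibre of $v \times v'$ over $(x',y')$ is the product of the fibre of $v$ over $x'$ with the fibre of $v'$ over $y'$, so the double sum factors as a product of sums. This fibrewise factorization, where span composition meets the monoidal structure, is the point I expect to be the main obstacle; everything else is formal.

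Finally I would dispatch coherence. The associativity and unit constraints for $k_*$ involve only the objects together with $\phi$ and the structural associators and unitors of $\times$ in $\E$ and $\ox_k$ in $\r$; they therefore commute by the classical fact that the free-module functor is symmetric strong monoidal from $(\Set,\times,1)$ to $(\Vect,\ox_k,k)$, restricted $G$-equivariantly to $k\colon \E \to \r$. As a sanity check one may use the factorization $(u,S,v)=v_* \o u^*$ in $\T$, under which $k_*(u^*)=k(u)$ recovers this covariant strong monoidal $k$ on the contravariant generators while $k_*(v_*)$ is the fibre-sum transfer; compatibility of $\phi$ with each factor separately then re-proves the naturality above and confirms that no further coherence data is needed.
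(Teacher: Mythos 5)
Your proof is correct and takes essentially the same approach as the paper: both use the canonical isomorphisms $kX \ox_k kY \cong k(X\times Y)$ and $k \cong k1$, and reduce everything to naturality of these maps with respect to spans, verified by the same fibrewise computation (the paper phrases the key identity as bilinearity of the tensor product, which is your product-of-fibres factorization). The equivariance and coherence checks you spell out are the routine details the paper leaves implicit.
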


\begin{proof}
Clearly the canonical isomorphisms
\[
k(X_1 \times X_2) \cong kX_1 \ox kX_2, \quad k1\cong k
\]
show that $k:\E  \ra \r$ is strong monoidal. All that remains to be seen is that these isomorphisms are natural
with respect to spans $(u_1,S_1,v_1):X_1 \ra Y_1, (u_2,S_2,v_2):X_2 \ra Y_2$. This comes down to the bilinearity of tensor product:
\[
\underset{v_2(s_2)=y_2}{\underset{v_1(s_1)=y_1}{\sum}} u_1(s_1)\ox u_2(s_2) =
\sum_{v_1(s_1)=y_1} u_1(y_1) \ox \sum_{v_2(s_2)=y_2} u_2(y_2).
\]
\end{proof}

We can now see that the adjunction
\[
\xymatrix@C=13pt{\text{\emph{colim}}(-,k_*)  & \ar@{|-}[l] \widetilde{k_*}}
\]
fits the situation of Day's Reflection Theorem \cite{Da2} and \cite[pages 24 and 25]{Da3}. For this, recall that a fully faithful functor $\Phi: \A \ra \X$ into a closed category $\X$ is said to be \emph{closed under exponentiation} when, for all $A$ in $\A$ and $X$ in $\X$, the internal hom $[X,\Phi A]$ is isomorphic to an
object of the form $\Phi B$ for some $B$ in $\A$.

\begin{theorem}
The functor $\textit{colim}(-,k_*): \Mky(G)_{\text{fin}} \ra \r$ is strong monoidal. Consequently,
$\widetilde{k_*}: \r \ra \Mky(G)_{\text{fin}}$ is monoidal and closed under exponentiation.
\end{theorem}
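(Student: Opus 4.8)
The plan is to recognize that this is precisely the situation governed by Day's Reflection Theorem (\cite{Da2}, \cite{Da3}): we have a fully faithful functor $\widetilde{k_*}: \r \ra \Mky(G)_{\text{fin}}$ into the monoidal closed category $\Mky(G)_{\text{fin}}$ (closed by Section \ref{Se6}, $*$-autonomous by Section \ref{Se9}), with the reflector $\textit{colim}(-,k_*)$ as its left adjoint. By that theorem the reflector is strong monoidal (and then $\widetilde{k_*}$ is monoidal as its right adjoint) precisely when $\widetilde{k_*}$ is closed under exponentiation. So the whole theorem reduces to the single verification that, for every $M \in \Mky(G)_{\text{fin}}$ and $R \in \r$, the internal hom $\Hom(M, R^-)$ is again of the form $T^-$ for some representation $T$.

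To establish this I would start from the second description of the internal hom obtained in Section \ref{Se6}, namely $\Hom(M, R^-)(V) = \Mky(M, R^-(V \ox -))$, and first simplify the functor $R^-(V \ox -)$. Using the strong monoidality of $k_*$ (Lemma \ref{10.3}) and the compact closedness of $\r$, one gets
\[
R^-(V \ox W) = \r(k_*(V \ox W), R) \cong \r(k_* V \ox k_* W, R) \cong \r(k_* W, [k_* V, R]),
\]
so that $R^-(V \ox -) \cong ([k_* V, R])^-$. Feeding this into the adjunction $\textit{colim}(-,k_*) \dashv \widetilde{k_*}$ and then applying compact closedness of $\r$ once more yields the chain
\[
\Hom(M, R^-)(V) \cong \r(\textit{colim}(M,k_*), [k_* V, R]) \cong \r(k_* V, [\textit{colim}(M,k_*), R]).
\]
Setting $T = [\textit{colim}(M,k_*), R]$, the right-hand side is exactly $T^-(V) = \widetilde{k_*}(T)(V)$, so $\Hom(M, R^-) \cong T^-$ lies in the image of $\widetilde{k_*}$; this is closure under exponentiation.

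With that in hand, Day's Reflection Theorem immediately supplies the strong monoidal structure on $\textit{colim}(-,k_*)$ and the induced monoidal structure on $\widetilde{k_*}$, completing the proof. I expect the only delicate point to be bookkeeping the naturality in $V$ through the displayed chain --- in particular checking that the identification $R^-(V \ox -) \cong ([k_* V, R])^-$ is natural in $V$ as well as in the internal variable, so that the pointwise isomorphisms assemble into an isomorphism of Mackey functors rather than merely a family of vector-space isomorphisms. Everything else is a direct appeal to Lemma \ref{10.3}, the adjunction, and the autonomy of $\r$.
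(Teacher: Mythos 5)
Your proposal is correct in substance, but it runs Day's Reflection Theorem in the opposite direction to the paper, and as written it is missing one step before it yields the theorem exactly as stated. The paper proves the \emph{first} sentence directly: a coend calculation (Lemma \ref{10.3} plus co-Yoneda for the Day convolution) shows $\textit{colim}(M*N,k_*)\cong \textit{colim}(M,k_*)\ox \textit{colim}(N,k_*)$, i.e.\ the reflector is strong monoidal for the \emph{given} tensor product on $\r$, and only then cites \cite{Da2} to deduce the second sentence (monoidality of the right adjoint and closure under exponentiation). You instead prove closure under exponentiation directly --- your computation $\Hom(M,R^-)\cong [\textit{colim}(M,k_*),R]^-$ is valid, its naturality in $V$ is unproblematic since each step is an instance of an adjunction or of the strong monoidal structure of $k_*$, and $[\textit{colim}(M,k_*),R]$ does lie in $\r$ because $\textit{colim}(M,k_*)$ is a quotient of a finite direct sum of finite-dimensional spaces --- and then invoke the Reflection Theorem for the first sentence. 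Your route buys an explicit formula for the internal hom of the reflective subcategory, which the paper never exhibits; the paper's route is shorter and never needs the closed structure of $\Mkyf$ at all.

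The gap is this: applied in your direction, the Reflection Theorem produces a monoidal structure on the reflective subcategory, namely the \emph{induced} one $(R^-,S^-)\mapsto L(R^-*S^-)$ where $L=\textit{colim}(-,k_*)$, and asserts that $L$ is strong monoidal for \emph{that} structure. The theorem you are proving asserts strong monoidality for the usual tensor product of representations on $\r$, so you still owe the identification $L(R^-*S^-)\cong R\ox S$. Fortunately your own hom formula supplies it, together with full faithfulness of $\widetilde{k_*}$ (which makes the counit $L(S^-)\cong S$ invertible): for all $T$ in $\r$,
\begin{align*}
\r\bigl(L(R^-*S^-),T\bigr) &\cong \Mky(R^-*S^-,T^-) \cong \Mky\bigl(R^-,\Hom(S^-,T^-)\bigr) \cong \Mky\bigl(R^-,[L(S^-),T]^-\bigr)\\
&\cong \Mky\bigl(R^-,[S,T]^-\bigr) \cong \r\bigl(R,[S,T]\bigr) \cong \r(R\ox S,T),
\end{align*}
and Yoneda gives $L(R^-*S^-)\cong R\ox S$, naturally and compatibly with the coherence isomorphisms. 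With that paragraph added, your argument is a complete and genuinely different proof of the theorem.
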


\begin{proof}
The first sentence follows quite formally from Lemma \ref{10.3} and the theory of Day convolution;
the main calculation is:
\begin{align*}
\text{\emph{colim}}(M*N,k_*)(Z) & = \int^C (M*N)(C) \ox_k k_* C \\
    & = \int^{C,X,Y} \T(X \times Y,C) \ox M(X) \ox_k N(Y) \ox_k k_*C \\
    & \cong \int^{X,Y} M(X) \ox_k N(Y) \ox_k k_*(X \times Y) \\
    & \cong \int^{X,Y} M(X) \ox_k N(Y) \ox_k k_*X \ox k_*Y \\
    & \cong \text{\emph{colim}}(M,k_*) \ox \text{\emph{colim}}(N,k_*).
\end{align*}
The second sentence then follows from \cite[Reflection Theorem]{Da2}.
\end{proof}

In fancier words, the adjunction
\[
\xymatrix@C=13pt{\text{\emph{colim}}(-,k_*)  & \ar@{|-}[l] \widetilde{k_*}}
\]
lives in the $2$-category of monoidal categories, monoidal functors and monoidal natural transformations
(all enriched over $\V$).


\section{Mackey functors for Hopf algebras} \label{Se11}

In this section we provide another example of a compact closed category $\T$ constructed from
a Hopf algebara $H$ (or \emph{quantum group}). We speculate that Mackey functors on this $\T$ will
prove as useful for Hopf algebras as usual Mackey functors have for groups.

Let $H$ be a braided (semisimple) Hopf algebra (over $k$). Let $\r$ denote the category of left $H$-modules
which are finite dimensional as vector spaces (over $k$). This is a compact closed braided monoidal
category.

We write $\Comod(\r)$ for the category obtained from the bicategory of that name in \cite{DMS} by taking
isomorphisms classes of morphisms. Explicitly, the objects are com\-onoids $C$ in $\r$. The morphisms
are isomorphism classes of comodules $S : \xymatrix@1@C=15pt{C \ar[r] |-{\object@{|}} & D}$ from $C$ to
$D$; such an $S$ is equipped with a coaction $\delta: S \ra C \ox S \ox D$ satisfying the coassociativity and
counity conditions; we can break the two-sided coaction $\delta$ into a left coaction $\delta_l:S \ra C \ox S$ and
a right coaction $\delta_r:S \ra S \ox D$ connected by the bicomodule condition. Composition of comodules
$S : \xymatrix@1@C=15pt{C \ar[r] |-{\object@{|}} & D}$ and
$T : \xymatrix@1@C=15pt{D \ar[r] |-{\object@{|}} & E}$ is defined by the (coreflexive) equalizer
\[
\xymatrix@C=7ex{
{S\ox_D T} \ar@<0.1ex>[r] & {S \ox T}
\ar@<0.7ex>[r]^-{1 \ox \delta_l} \ar@<-0.7ex>[r]_-{\delta_r \ox 1}
& {S \ox D \ox T~.} }
\]
The identity comodule of $C$ is $C : \xymatrix@1@C=15pt{C \ar[r] |-{\object@{|}} & C}$. The category
$\Comod(\r)$ is compact closed: the tensor product is just that for vector spaces equipped with the extra structure. Direct sums in $\Comod(\r)$ are given by direct sum as vector spaces. Consequently,
$\Comod(\r)$ is enriched in the monoidal category $\V$ of commutative monoids: to add comodules
$S_1 : \xymatrix@1@C=15pt{C \ar[r] |-{\object@{|}} & D}$ and
$S_2 : \xymatrix@1@C=15pt{C \ar[r] |-{\object@{|}} & D}$, we take the direct sum $S_1 \oplus S_2$ with coaction defined as the composite
\[
\xymatrix@C=7.5ex{
{S_1 \oplus S_2} \ar@<0.1ex>[r]^-{\delta_1\oplus \delta_2} &
{C \ox S_1 \ox D} \oplus {C \ox S_2 \ox D \cong C \ox(S_1 \oplus S_2) \ox D}.
 }
\]

We can now apply our earlier theory to the example $\T=\Comod(\r)$. In particular, we call a $\V$-enriched functor $M: \Comod(\r) \ra \Vect_k$ a \emph{Mackey functor on} $H$.

In the case where $H$ is the group algebra $kG$ (made Hopf by means of the diagonal
$kG \ra k(G \times G) \cong kG \ox_k kG)$, a Mackey functor on $H$ is not the same as a Mackey functor on
$G$. However, there is a strong relationship that we shall now explain.

As usual, let $\E$ denote the cartesian monoidal category of finite $G$-sets. The functor $k: \E \ra \r$ is strong monoidal and preserves coreflexive equalizers. There is a monoidal equivalence
\[
\Comod(\E) \simeq \Spn(\E),
\]
so $k: \E \ra \r$ induces a strong monoidal $\V$-functor
\[
\hat{k}: \Spn(\E) \ra \Comod(\r).
\]
With $\Mky(G) =[\Spn(\E), \Vect]_+$ as usual and with $\Mky(kG)= [\Comod(\r), \Vect]_+$,
we obtain a functor
\[
[\hat{k},1]: \Mky(kG) \ra \Mky(G)
\]
defined by pre-composition with $\hat{k}$. Proposition 1 of \cite{DS2} applies to yield:

\begin{theorem}
The functor $[\hat{k},1]$ has a strong monoidal left adjoint
\[
\exists_{\hat{k}}: \Mky(G) \ra \Mky(kG).
\]
The adjunction is monoidal.
\end{theorem}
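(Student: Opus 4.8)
The plan is to identify $\exists_{\hat k}$ with the $\V$-enriched left Kan extension along $\hat k$, and then to obtain both its strong monoidality and the monoidality of the adjunction from the Day-convolution machinery invoked as Proposition 1 of \cite{DS2}; the substantive content is to check that the hypotheses of that result are met and to confirm the strong-monoidal comparison by a coend manipulation.

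First I would construct the left adjoint explicitly. Because $\Vect$ is $\V$-cocomplete and $\Spn(\E)$ admits a small dense subcategory (the connected $G$-sets $\C$ of Section \ref{Se9}), the enriched left Kan extension of any $M \in \Mky(G)$ along $\hat k$ exists and is computed by the coend
\[
(\exists_{\hat k}M)(D) \cong \int^{X} \Comod(\r)(\hat k X, D) \ox M(X).
\]
I would then check that $\exists_{\hat k}M$ preserves direct sums, so that it genuinely lands in $\Mky(kG) = [\Comod(\r), \Vect]_+$: since $\Comod(\r)$ is $\V$-enriched with biproducts, each representable $\Comod(\r)(\hat k X, -)$ preserves direct sums, and the coend commutes with the direct sums in the second variable. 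The adjunction $\exists_{\hat k} \dashv [\hat k, 1]$ is then the standard restriction/left-Kan-extension adjunction, with unit and counit supplied by the universal property of the coend.

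Second, for the strong monoidal structure I would run the coend calculation
\begin{align*}
(\exists_{\hat k}M * \exists_{\hat k}N)(D)
&\cong \int^{D_1,D_2} \Comod(\r)(D_1 \ox D_2, D) \ox (\exists_{\hat k}M)(D_1) \ox_k (\exists_{\hat k}N)(D_2) \\
&\cong \int^{X,Y} \Comod(\r)(\hat k X \ox \hat k Y, D) \ox M(X) \ox_k N(Y) \\
&\cong \int^{X,Y} \Comod(\r)(\hat k(X \ox Y), D) \ox M(X) \ox_k N(Y) \\
&\cong \int^{W} \Comod(\r)(\hat k W, D) \ox (M * N)(W) \\
&\cong \exists_{\hat k}(M*N)(D),
\end{align*}
where the first reduction uses co-Yoneda to eliminate $D_1, D_2$, the third line uses that $\hat k$ is \emph{strong} monoidal (so $\hat k X \ox \hat k Y \cong \hat k(X \ox Y)$), and the penultimate line uses co-Yoneda again in $W$ against the convolution formula for $M * N$. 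The unit comparison $\exists_{\hat k}(J) \cong J'$ between the Burnside functors of $\Spn(\E)$ and of $\Comod(\r)$ follows the same way from $\hat k I \cong I$. Verifying that these isomorphisms are coherent and natural promotes $\exists_{\hat k}$ to a strong monoidal $\V$-functor, and doctrinal adjunction then makes $[\hat k, 1]$ lax monoidal with the whole adjunction monoidal.

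The main obstacle is book-keeping rather than conceptual: ensuring that all the coends in question exist and that the Fubini and co-Yoneda reductions are legitimate in the $\V$-enriched (commutative-monoid-enriched) setting, and---slightly more delicately---that the left Kan extension really does restrict to the coproduct-preserving subcategories on both sides, so that $\exists_{\hat k}$ is defined on $\Mky(G)$ and not merely on $[\Spn(\E), \Vect]$. Once the hypotheses of \cite{DS2} are seen to hold, the strong monoidality and the monoidality of the adjunction are precisely its conclusion, and the calculation above is what certifies those hypotheses.
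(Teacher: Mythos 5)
Your proposal is correct and takes essentially the same route as the paper: the paper proves the theorem simply by invoking Proposition 1 of \cite{DS2}, whose content---the left Kan extension along the strong monoidal $\V$-functor $\hat{k}$, given by exactly the coend formula you write, with strong monoidality from the co-Yoneda/convolution calculation and the monoidal adjunction from doctrinal adjunction---is precisely what you reconstruct by hand. Your extra checks (that the coends exist and that $\exists_{\hat{k}}$ lands in the coproduct-preserving subcategory $\Mky(kG)$) are just the hypotheses the paper leaves implicit in that citation.
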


The formula for $\exists_{\hat{k}}$ is
\[
\exists_{\hat{k}}(M)(R)= \int^{X \in \Spn(\E)} \Comod(\r)(\hat{k}X,R) \ox M(X).
\]

On the other hand, we already have the compact closed category $\r$ of finite-dimensional representations
of $G$ and the strong monoidal functor
\[
k_*: \Spn(\E)^{\text{op}} \ra \r.
\]
Perhaps $\r^{\text{op}} (\simeq \r)$ should be our candidate for $\T$ rather than the more complicated
$\Comod(\r)$. The result of \cite{DS2} applies also to $k_*$ to yield a monoidal adjunction
\[
\xymatrix{
[\r^{\text{op}}, \Vect]  \ar@<-1.2ex>[r]^-{\perp}_-{[k_*,1]} & \ar@<-1.2ex>[l]_-{\exists_{k^*}} \Mky(G).}
\]
Perhaps then, additive functors $\r^{\text{op}} \ra \Vect$ would provide a suitable generalization of
Mackey functors in the case of a Hopf algebra $H$. These matters require investigation at a later time.


\section{Review of some enriched category theory} \label{Se12}
The basic references are \cite{Ke}, \cite{La} and \cite{St}.

Let $\CO_\V$ denote the $2$-category whose objects are cocomplete $\V$-categories
and whose morphisms are (weighted-) colimit-preserving $\V$-functors; the $2$-cells
are $\V$-natural transformations.

Every small $\V$-category $\C$ determines an object $[\C,\V]$ of $\CO_\V$. Let
\[
Y: \C^{\text{op}} \ra [\C,\V]
\]
denote the Yoneda embedding: $YU = \C(U,-)$.

For any object $\X$ of $\CO_\V$, we have an equivalence of categories
\[
\CO_\V([\C,\V],\X) \simeq [\C^{\text{op}},\X]
\]
defined by restriction along $Y$. This is expressing the fact that $[\C,\V]$
is the free cocompletion of $\C^{\text{op}}$. It follows that, for small $\V$-categories
$\C$ and $\D$, we have
\begin{equation*}
\begin{split}
\CO_\V([\C,\V],[\D,\V]) & \simeq [\C^{\text{op}},[\D,\V]] \\
  & \simeq [\C^{\text{op}}\ox\D,\V].
\end{split}
\end{equation*}
The way this works is as follows.
Suppose $F: \C^{\text{op}} \ox \D \ra \V$ is a ($\V$-) functor. We obtain a
colimit-preserving functor
\[
\widehat{F}: [\C,\V] \ra [\D,\V]
\]
by the formula
\[
\widehat{F}(M)V = \int^{U\in \C} F(U,V) \ox MU
\]
where $M \in [\C,\V]$ and $V\in \D$. Conversely, given $G: [\C,\V] \ra [\D,\V]$,
define
\[
\Ve{G} : \C^{\text{op}} \ox \D \ra \V
\]
by
\[
\Ve{G}(U,V) = G(\C(U,-))V.
\]
The main calculations proving the equivalence are as follows:
\begin{equation*}
\begin{split}
\Ve{\widehat{F}}(U,V) & = \widehat{F}(\C(U,-))V \\
    & \cong \int^{U'}F(U',V) \ox \C(U,U') \\
    & \cong F(U,V) \quad \quad \text{by Yoneda; }
\end{split}
\end{equation*}
and,
\begin{equation*}
\begin{split}
\widehat{\Ve{G}}(M)V & = \int^{U} \Ve{G}(U,V)\ox MU \\
    & \cong (\int^{U}G(\C(U,-)) \ox MU)V \\
    & \cong G(\int^{U}\C(U,-)\ox MU)V \quad \text{since $G$ preserves weighted colimits} \\
    & \cong G(M)V \quad \quad \text{by Yoneda again.}
\end{split}
\end{equation*}

Next we look how composition of $G$s is transported to the $F$s. Take
\[
F_1: \C^{\text{op}} \ox \D \ra \V, \quad F_2: \D^{\text{op}} \ox \E \ra \V
\]
so that $\widehat{F_1}$ and $\widehat{F_2}$ are composable:

\[
\xygraph{
{[\C,\V]}="a"
(:[u(1.2)r(2.2)] {[\D,\V]}^-{\widehat{F_1}}
:[d(1.2)r(2.2)] {[\E,\V].}="t"^-{\widehat{F_2}},
"a" : @/_4ex/ _-{\widehat{F_2} \o \widehat{F_1}} "t"
)}
\]
Notice that
\begin{equation*}
\begin{split}
(\widehat{F_2} \o \widehat{F_1})(M)  & =\widehat{F_2}(\widehat{F_1}(M)) \\
    & = \int^{V\in \D} F_2(V,-) \ox \widehat{F_1}(M)V \\
    & \cong \int^{U,V} F_2(V,-) \ox F_1(U,V) \ox MU \\
    & \cong \int^{U} (\int^{V} F_2(V,-) \ox F_1(U,V)) \ox MU.
\end{split}
\end{equation*}
So we define $F_2 \o F_1 : \C^{\text{op}} \ox \E \ra \V$ by
\begin{equation} \label{1}
(F_2 \o F_1)(U,W) = \int^{V} F_2(V,W) \ox F_1(U,V);
\end{equation}
the last calculation then yields
\[
\widehat{F_2} \o \widehat{F_1} \cong \widehat{F_2 \o F_1}.
\]
The identity functor $1_{[\C,\V]} : [\C,\V] \ra [\C,\V]$ corresponds to the hom functor of $\C$;
that is,
\[
\Ve{1}_{[\C,\V]}(U,V) = \C(U,V).
\]

This gives us the bicategory $\VMod$. The objects are (small) $\V$-categories $\C$. A morphism
$F: \xymatrix@1{\C \ar[r] |-{\object@{/}} & \D}$
is a $\V$-functor $F: \C^{\text{op}} \ox \D \ra \V$; we call this a \emph{module from} $\C$ \emph{to} $\D$ (others call it a \emph{left} $\D$-,
\emph{right} $\C$-\emph{bimodule}). Composition of modules is defined by (\ref{1})
above.

We can sum up now by saying that
\[
\widehat{(~)} : \VMod \ra \CO_{\V}
\]
is a pseudofunctor (= homomorphism of bicategories) taking $\C$ to $[\C,\V]$, taking
$F: \xymatrix@1{\C \ar[r] |-{\object@{/}} & \D}$ to $\widehat{F}$,
and defined on $2$-cells in the obious way; moreover, this pseudofunctor is a local equivalence
(that is, it is an equivalence on hom-categories):
\[
\widehat{(~)}: \VMod (\C,\D) \simeq \CO_{\V}([\C,\V],[\D,\V]).
\]

A monad $T$ on an object $\C$ of $\VMod$ is called a \emph{promonad on} $\C$. It is the same as giving a
colimit-preserving monad
$\widehat{T}$ on the $\V$-category $[\C,\V]$. One way that promonads arise is from monoids $A$ for some
convolution monoidal structure on $[\C,\V]$; then
\[
\widehat{T}(M) = A \ast M.
\]
That is, $\C$ is a promonoidal $\V$-category \cite{Da}:
\[
P: \C^{\text{op}} \ox \C^{\text{op}}\ox \C \ra \V
\]
\[
J: \C \ra \V
\]
so that
\[
\widehat{T}(M) = A \ast M = \int^{U,V} P(U,V;-) \ox AU \ox MV.
\]
This means that the module $T: \xymatrix@1{\C \ar[r] |-{\object@{/}} & \C}$
is defined by
\begin{equation*}
\begin{split}
T(U,V)  & = \widehat{T}(\C(U,-))V \\
    & = \int^{U',V'} P(U',V';V) \ox AU' \ox \C(U,V') \\
    & \cong \int^{U'} P(U',U;V) \ox AU'.
\end{split}
\end{equation*}

A promonad $T$ on $\C$ has a unit $\eta: \Ve{1} \ra T$ with components
\[
\eta_{U,V} : \C(U,V) \ra T(U,V)
\]
and so is determined by
\[
\eta_{U,V}(1_U) : I \ra T(U,U),
\]
and has a multiplication $\mu: T \o T \ra T$ with components
\[
\mu_{U,W} : \int^{V} T(V,W) \ox T(U,V) \ra T(U,W)
\]
and so is determined by a natural family
\[
\mu'_{U,V,W}: T(V,W) \ox T(U,V) \ra T(U,W).
\]

The \emph{Kleisli category} $\C_T$ for the promonad $T$ on $\C$ has the same objects as $\C$ and has
homs defined by
\[
\C_T(U,V) = T(U,V);
\]
the identites are the $\eta_{U,V}(1_U)$ and the composition is the $\mu'_{U,V,W}$.

\begin{proposition} \label{13.1}
$[\C_T,\V] \simeq [\C,\V]^{\widehat{T}}.$
That is, the functor category $[\C_T,\V]$ is equivalent to the category of Eilenberg-Moore algebras for
the monad $\widehat{T}$ on $[\C,\V]$.
\end{proposition}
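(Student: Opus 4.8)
The plan is to construct the equivalence by hand, transposing the Eilenberg--Moore structure into Kleisli-functor data. The key observation is that, under the identifications of Section~\ref{Se12}, the monad $\widehat{T}$ is given on objects by the coend
\[
\widehat{T}(M)(V) = \int^{U} T(U,V) \ox MU.
\]
Hence, by the universal property of this coend and the closedness of $\V$, an action $a : \widehat{T}M \ra M$ in $[\C,\V]$ amounts to exactly a family
\[
\alpha_{U,V} : T(U,V) \ox MU \longrightarrow MV
\]
dinatural in $U$ and natural in $V$, equivalently a family of morphisms $T(U,V) \ra [MU,MV]$. Together with the object assignment $U \mapsto MU$, this last family is precisely the data of a $\V$-functor $\overline{M} : \C_T \ra \V$ acting on the homs $\C_T(U,V) = T(U,V)$. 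So I would define $\Phi : [\C,\V]^{\widehat{T}} \ra [\C_T,\V]$ by $(M,a) \mapsto \overline{M}$, and a candidate inverse $\Psi$ by sending a $\V$-functor on $\C_T$ to its restriction along the $\V$-functor $\C \ra \C_T$ induced by $\eta$ (an object of $[\C,\V]$), equipped with the transpose of its action on homs.

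The core is to check that the two descriptions carry the same axioms and that $\Phi,\Psi$ are mutually inverse. For this I would use that the monad data $\widehat{\eta} : 1 \Longrightarrow \widehat{T}$ and $\widehat{\mu} : \widehat{T}\widehat{T} \Longrightarrow \widehat{T}$ arise by applying the pseudofunctor $\widehat{(~)}$ to the promonad unit $\eta : \Ve{1} \ra T$ and multiplication $\mu : T \o T \ra T$ (via $\widehat{\Ve{1}} = 1$ and $\widehat{T \o T} \cong \widehat{T}\widehat{T}$). Unwinding the algebra unit law $a \o \widehat{\eta}_M = 1_M$ through the component $\eta_{U,U}(1_U) : I \ra T(U,U)$ gives exactly $\alpha_{U,U} \o (\eta_{U,U}(1_U) \ox 1) = 1_{MU}$, the Kleisli identity law for $\overline{M}$; unwinding associativity $a \o \widehat{\mu}_M = a \o \widehat{T}a$ through $\mu'_{U,V,W}$ gives exactly the statement that $\alpha$ respects Kleisli composition. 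On arrows, a map of algebras $\phi : (M,a) \ra (M',a')$ is a $\phi$ in $[\C,\V]$ with $\phi \o a = a' \o \widehat{T}\phi$, and componentwise this says $\phi$ commutes with every $\alpha_{U,V}$, i.e. that $\phi$ is a $\V$-natural transformation $\overline{M} \ra \overline{M'}$. The computations showing $\Psi\Phi$ and $\Phi\Psi$ are isomorphic to the identities are of the same shape as the identities $\Ve{\widehat{F}} \cong F$ and $\widehat{\Ve{G}} \cong G$ already established in Section~\ref{Se12}; in particular, that restricting $\overline{M}$ along $\C \ra \C_T$ recovers the underlying $\C$-functor of $M$ is just the co-Yoneda fact that the coprojections of $\int^{U}\C(U,V) \ox MU \cong MV$ are the structure maps of $M$.

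I expect the main obstacle to be the coend bookkeeping. One must verify that the transpose of an arbitrary algebra action is genuinely dinatural in $U$, so that it both descends from and is recovered by the coend defining $\widehat{T}M$, and dually that the action read off a $\C_T$-functor has the dinaturality needed to assemble into a single morphism out of that coend. Interlocked with this is the careful matching of the full \emph{monad} structure $(\widehat{\eta},\widehat{\mu})$ --- not merely the underlying endofunctor --- against the promonad data $(\eta,\mu)$, which is exactly where the local equivalence and pseudofunctoriality of $\widehat{(~)}$ do the work.

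A more structural alternative sidesteps the axiom-chasing. The promonad unit gives an identity-on-objects $\V$-functor $J : \C \ra \C_T$, and restriction $[J,\V] : [\C_T,\V] \ra [\C,\V]$ has a left adjoint given by left Kan extension, with
\[
(\mathrm{Lan}_J M)(W) = \int^{U} \C_T(JU,W) \ox MU = \int^{U} T(U,W) \ox MU = \widehat{T}(M)(W),
\]
so the monad induced by this adjunction is precisely $\widehat{T}$. Since $J$ is the identity on objects, $[J,\V]$ reflects isomorphisms and preserves the pointwise colimits required by the enriched version of Beck's monadicity theorem, hence is monadic; this yields $[\C_T,\V] \simeq [\C,\V]^{\widehat{T}}$ directly. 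On this route the only remaining obstacle is the verification of enriched monadicity for $[J,\V]$.
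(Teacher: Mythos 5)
Your first argument is exactly the paper's proof of Proposition \ref{13.1}: there, a $\widehat{T}$-action $\widehat{T}(M) \ra M$ is transposed, via the coend formula for $\widehat{T}$ and the closedness of $\V$, into a family $T(U,V) \ra [MU,MV]$, i.e.\ into hom-maps $\C_T(U,V) \ra \V(MU,MV)$, and the action axioms are identified with functoriality of $\overline{M}$; the paper leaves the axiom-matching and the reverse direction as a sketch (``this process can be reversed''), and the details you supply are the correct ones (unit law $\leftrightarrow$ preservation of Kleisli identities via $\eta_{U,U}(1_U)$, associativity $\leftrightarrow$ compatibility with $\mu'_{U,V,W}$, algebra morphisms $\leftrightarrow$ $\V$-natural transformations). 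Your second, monadicity route is genuinely different from the paper's and is also sound: since the unit of the promonad induces an identity-on-objects $\V$-functor $J : \C \ra \C_T$, restriction $[J,\V]$ has the left adjoint $\mathrm{Lan}_J$, whose pointwise coend formula is literally the formula for $\widehat{T}$; and $[J,\V]$ reflects isomorphisms and preserves the (pointwise) colimits needed, so the crude enriched Beck theorem gives monadicity. What that route buys is the replacement of element-level axiom chasing by standard machinery, and it exhibits the equivalence as one of categories over $[\C,\V]$; what it costs is the verification that the monad induced by the adjunction agrees with $\widehat{T}$ \emph{as a monad} (unit and multiplication, not merely the underlying endofunctor) --- essentially the same pseudofunctoriality bookkeeping your first route needs --- together with an appeal to enriched monadicity, which the paper's elementary transposition avoids entirely.
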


\begin{proof}(sketch)
To give a $\widehat{T}$-algebra structure on $M\in [\C,\V]$ is to give a morphism
$\alpha: \widehat{T}(M) \ra M$ satisfying the two axioms for an action. This is to give a natural
family of morphisms
\[
T(U,V) \ox MU \ra MV;
\]
but that is to give
\[
T(U,V) \ra [MU,MV];
\]
but that is to give
\begin{equation}\label{2}
\C_T(U,V) \ra \V(MU,MV).
\end{equation}
Thus we can define a $\V$-functor
\[
\overline{M}:\C_T \ra \V
\]
which agrees with $M$ on objects and is defined by (\ref{2}) on homs;
the action axioms are just what is
needed for $\overline{M}$ to be a functor. This process can be reversed.
\end{proof}

\section{Modules over a Green functor} \label{Se13}
In this section, we present work inspired by Chapters $2, 3$ and $4$ of \cite{Bo1}, casting it in a
more categorical framework.

Let $\E$ denote a lextensive category and $\CMon$ denote the category of commutative monoids;
this latter is what we called $\V$ in earlier sections. The functor $U:\kMod \ra $ $\CMon$ (which forgets the
action of $k$ on the $k$-module
and retains only the additive monoid structure) has a left adjoint $K: \CMon \ra \kMod$ which is strong monoidal for
the obvious tensor products on $\CMon$ and $\kMod$. So each category $\A$ enriched in $\CMon$ determines a category
$K_*\A$ enriched in $\kMod$: the objects of $K_*\A$ are those of $\A$ and the homs
are defined by
\[
(K_*\A)(A,B) = K\A(A,B)
\]
since $\A(A,B)$ is a commutative monoid. The point is that a $\kMod$-functor $K_*\A$
$ \ra \B$ is the same as a $\CMon$-functor $\A \ra U_*\B$.

We know that $\Spn(\E)$ is a $\CMon$-category; so we obtain a monoidal $\kMod$-category
\[
\C = K_*\Spn(\E).
\]

The $\kMod$-category of \emph{Mackey functors} on $\E$ is $\mathbf{Mky}_k(\E) = [\C,\kMod]$; it becomes monoidal using convolution with the monoidal structure on $\C$ (see Section \ref{Se5}). The $\kMod$-category of
\emph{Green functors} on $\E$ is $\mathbf{Grn}_k(\E) =\mathbf{Mon}[\C,\kMod]$ consisting of the monoids in
$[\C,\kMod]$ for the convolution.

Let $A$ be a Green functor. A \emph{module} $M$ over the Green functor $A$, or
\emph{$A$-module} means $A$ acts on $M$ via the convolution $\ast $. The monoidal
action $\alpha^M: A*M \ra M$ is defined by a family of morphisms
\[
\bar{\alpha}^M_{U,V} : A(U) \ox_k M(V) \ra M(U \times V) ,
\]
where we put $\bar{\alpha}^M_{U,V}(a \ox m)=a.m$ for $a \in A(U)$, $m \in M(V)$, satisfing
the following commutative diagrams for morphisms $f: U \ra U'$ and $g: V \ra V'$ in $\E$.
\[
\vcenter{\xygraph{
{A(U) \ox_k M(V)}="a"
(:[r(2.5)] {M(U \times V)}^-{\bar{\alpha}^M_{U,V}}
:[d(1.2)] {M(U' \times V')}="t"^-{M_*(f \times g)},
:[d(1.2)] {A(U') \ox_k M(V')}_-{A_*(f) \ox_k M_*(g)}
:"t"_-{\bar{\alpha}^M_{U',V'}}
)}}
\qquad
\vcenter{\xygraph{
{M(U)}="s"
(:[r(2.3)] {A(1) \ox_k M(U)}^-{\eta \ox 1}
:[d(1.2)] {M(1 \times U)}="t"^-{\bar{\alpha}^M},
:"t" _-{\cong}
)}}
\]

\[\xygraph{
{A(U) \ox_k A(V) \ox_k M(W)}="a"
(:[r(4.0)] {A(U) \ox_k M(V \times W)}^-{1 \ox \bar{\alpha}^M}
:[d(1.2)] {M(U \times V \times W)~.} ="t"^-{\bar{\alpha}^M},
:[d(1.2)] {A(U \times V) \ox_k M(W)}_-{\mu \ox 1}
:"t"_-{\bar{\alpha}^M}
)}
\]
If $M$ is an $A$-module, then $M$ is in particular a Mackey functor.

\begin{lemma} \label{10.1}
Let $A$ be a Green functor and $M$ be an $A$-module. Then $M_U$ is an $A$-module
for each $U$ of $\E$, where $M_U(X)=M(X \times U)$.
\end{lemma}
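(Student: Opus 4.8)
The statement to prove is Lemma~\ref{10.1}: if $A$ is a Green functor and $M$ is an $A$-module, then the Dress-shifted Mackey functor $M_U$ (with $M_U(X) = M(X \times U)$) is again an $A$-module. Let me think about the right approach.

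The most conceptual route is to use the Dress construction machinery already established. By Proposition~\ref{Pro8.1}, the Dress construction $D(U,-)$ is a strong monoidal endofunctor of $\Mky$, so it carries the monoid $A$ (a Green functor) to the Green functor $A_U$, and more importantly it respects the action structure. But the subtlety is that $M$ is a module over $A$, not over $A_U$, and the claim is that $M_U$ is again a module over the \emph{original} $A$, not over $A_U$. So I cannot simply apply strong monoidality of $D$ blindly; I need to build the $A$-action on $M_U$ explicitly.

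The plan is therefore to define the action map directly and verify the axioms. First I would write down the required family
\[
\bar{\alpha}^{M_U}_{X,Y}: A(X) \ox_k M_U(Y) = A(X) \ox_k M(Y \times U) \ra M_U(X \times Y) = M(X \times Y \times U),
\]
and define it as the composite of the given action $\bar{\alpha}^M_{X, Y\times U}: A(X) \ox_k M(Y \times U) \ra M(X \times Y \times U)$. In other words, I set $\bar{\alpha}^{M_U}_{X,Y} = \bar{\alpha}^M_{X, Y \times U}$, using only that $X \times (Y \times U) = (X \times Y) \times U$ in $\E$. This is the natural candidate because the $U$-slot is simply carried along passively in the second factor.

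Next I would verify the three coherence diagrams from the definition of an $A$-module. For the naturality square with respect to $f: X \ra X'$ and $g: Y \ra Y'$, I would observe that applying the module axiom for $M$ with the pair $(f, g \times 1_U)$ gives exactly the square for $M_U$ with $(f,g)$, since $M_*(f \times g \times 1_U) = (M_U)_*(f \times g)$ by definition of $M_U$ on spans. The unit axiom and the associativity (action-compatibility) pentagon follow in the same way: each is obtained from the corresponding axiom for $M$ by instantiating the relevant object slot as $(-) \times U$ and using the associativity isomorphisms of $\times$ in $\E$. The main obstacle, such as it is, will be bookkeeping the associativity constraints of the cartesian product carefully so that the $U$-factor threads through consistently across all three diagrams; this is routine coherence but must be done honestly. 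I would close by remarking that this is precisely the module-level shadow of the fact that $-\times U$ (equivalently the Dress endofunctor) is monoidal, so the $A$-action transports along it, and that the construction is functorial in $U$.
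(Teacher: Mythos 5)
Your proposal is correct and coincides with the paper's proof: the paper simply defines $\bar{\alpha}^{M_U}_{V,W} = \bar{\alpha}^M_{V, W \times U}$ and leaves the axiom-checking implicit, which is exactly your construction. Your additional verification of the naturality, unit, and associativity diagrams (via instantiating the axioms for $M$ at $(-) \times U$) is the routine work the paper's one-line proof suppresses, so the two arguments are essentially identical.
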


\begin{proof}
Simply define $\bar{\alpha}^{M_U}_{V,W }=\bar{\alpha}^M_{V,W \times U}$.
\end{proof}

Let $\ModA$ denote the category of left $A$-modules for a Green functor $A$. The objects are
$A$-modules and morphisms are $A$-module morphisms
$\theta: M \ra N$ (that is, morphisms of Mackey functors) satisfying the following commutative
diagram.
\[\xygraph{
{A(U) \ox_k M(V)}="a"
(:[r(2.5)] {M(U \times V)}^-{\bar{\alpha}^M_{U,V}}
:[d(1.2)] {N(U \times V)}="t"^-{\theta(U \times V)},
:[d(1.2)] {A(U) \ox_k N(V)}_-{1 \ox_k \theta(U)}
:"t"_-{\bar{\alpha}^N_{U,V}}
)}
\]
The category $\ModA$ is enriched in $\Mky$. The homs are given by the equalizer
\[\xygraph{
{\Mod(A)(M,N)}="a"
(:[r(2.5)] {\Hom(M,N)}
(:[r(3.4)] {\Hom(A*M,N)}="t"^-{\Hom(\alpha^M,1)},
:[d(1.2)r(1.8)] {\Hom(A*M,A*N)~.}_-{(A*-)}
:"t"_-{\Hom(1, \alpha^N)}
))}
\]
Then we see that $\ModA(M,N)$ is the sub-Mackey functor of $\Hom(M,N)$ defined by
\begin{equation*}
\begin{split}
\ModA(M,N)(U)= & \{ \theta \in \Mky(M(- \times U),N-) \quad  \mid~
\theta_{V \times W}(a.m)= a.\theta_W(m)  \\
& \quad \text{for all} ~ V,W,~ \text{and} ~ a \in A(V), m \in M(W \times U) \}.
\end{split}
\end{equation*}
In particular, if $A=J$ (Burnside functor) then $\ModA$ is the category of Mackey functors
and $\ModA(M,N)=\Hom (M,N)$.

The Green functor $A$ is itself an $A$-module. Then by the Lemma \ref{10.1}, we see that $A_U$
is an $A$-module for each $U$ in $\E$. Define a category $\C_A$ consisting of the objects of
the form $A_U$ for each $U$ in $\C$. This is a full subcategory of $\ModA$ and we have the
following equivalences
\[
\C_A(U,V) \simeq \ModA(A_U,A_V) \simeq A(U \times V).
\]

In other words, the category $\ModA$ of left $A$-modules is the category of
Eilenberg-Moore algebras
for the monad $T=A \ast -$ on $[\C,\kMod]$; it preserves colimits since it has a right adjoint
(as usual with convolution tensor products). By the above, the $\kMod$-category $\C_A$
(technically it is the Kleisli category
$\C_{\Ve{T}}$ for the promonad $\Ve{T}$ on $\C$; see Proposition \ref{13.1}) satisfies an equivalence
\[
[\C_A,\kMod] \simeq \ModA.
\]

Let $\C$ be a $\kMod$-category with finite direct sums and $\Omega$ be  a finite set of objects of $\C$ such that
every object of $\C$ is a direct sum of objects from $\Omega$.

Let $W$ be the algebra of $\Omega \times \Omega$ - matrices whose $(X,Y)$ - entry is a morphism $X \ra Y$ in $\C$. Then
\[
W=\left \{ (f_{XY})_{X,Y \in \Omega} ~\mid ~ f_{XY} \in \C(X,Y) \right \}
\]
is a vector space over $k$, and the product is defined by
\[
(g_{XY})_{X,Y \in \Omega} (f_{XY})_{X,Y \in \Omega} = \Big(\sum_{Y \in
\Omega} g_{YZ} \o f_{XY}\Big)_{X,Z \in \Omega}.
\]

\begin{proposition}
$[\C,\kMod] \simeq \kMod^{W}$ (= the category of left $W$-modules).
\end{proposition}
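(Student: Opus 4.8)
The plan is to factor the desired equivalence through the finite full subcategory $\Omega$, reducing first to $\kMod$-functors on $\Omega$ and then identifying these with $W$-modules via the category algebra of $\Omega$.

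First I would regard $\Omega$ as a full $\kMod$-subcategory of $\C$ and argue that restriction along the inclusion induces an equivalence
\[
[\C,\kMod] \simeq [\Omega,\kMod].
\]
The point is that every $\kMod$-functor $M:\C \ra \kMod$ preserves finite direct sums (biproducts are absolute in a $\kMod$-enriched setting), so writing an arbitrary object as a direct sum $\bigoplus_i C_i$ with $C_i \in \Omega$ forces $M(\bigoplus_i C_i) \cong \bigoplus_i M(C_i)$; this is exactly the density-presentation argument already carried out in Section \ref{Se9}. Conversely, any functor defined on $\Omega$ extends to $\C$ by taking direct sums, using that $\C(\bigoplus_i C_i, \bigoplus_j D_j) \cong \bigoplus_{i,j}\C(C_i,D_j)$ by the universal property of biproducts. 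Full faithfulness of restriction comes down to the same decomposition, since a $\V$-natural transformation is determined by its components at the objects of $\Omega$.

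Next I would identify $[\Omega,\kMod]$ with $\kMod^{W}$. Here $W=\bigoplus_{X,Y\in\Omega}\C(X,Y)$ is precisely the \emph{category algebra} of the finite $\kMod$-category $\Omega$, its multiplication being the matrix product defining $W$, which is composition $\o$ in $\C$. Finiteness of $\Omega$ is essential: it guarantees that $\sum_{X\in\Omega}1_X$ is a two-sided identity for $W$, and the $1_X$ form a complete orthogonal family of idempotents $e_X=1_X$. I would send a functor $M:\Omega \ra \kMod$ to the left $W$-module $\bigoplus_{X\in\Omega}M(X)$, where an entry $f_{XY}\in\C(X,Y)$ acts by $M(f_{XY})$; conversely a $W$-module $N$ is sent to the functor $X\mapsto e_XN$, a morphism $f:X \ra Y$ acting as multiplication by $f$ viewed in the $(X,Y)$-slot of $W$. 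The orthogonal idempotent decomposition $N\cong\bigoplus_X e_XN$ then shows these constructions are mutually quasi-inverse.

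Composing the two equivalences yields $[\C,\kMod]\simeq\kMod^{W}$. Equivalently, setting $G=\bigoplus_{X\in\Omega}X$ one has $W\cong\C(G,G)$, and the composite equivalence is evaluation $M\mapsto M(G)$ at this generator. I expect the main obstacle to be the first step: verifying carefully that restriction to $\Omega$ is essentially surjective and fully faithful, i.e.\ that $\C$ is genuinely the finite-direct-sum completion of $\Omega$ and that additive functors are rigidly controlled by their values on $\Omega$. Once that is in hand, the passage from $[\Omega,\kMod]$ to $\kMod^{W}$ is the standard idempotent-decomposition bookkeeping, with finiteness of $\Omega$ doing the work of making $W$ unital.
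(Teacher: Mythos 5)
Your proof is correct, but it takes a genuinely different route from the paper's. The paper's own proof is essentially a one-liner: it puts $P=\bigoplus_{X\in\Omega}\C(X,-)$, observes that $P$ is a small projective generator of the abelian category $[\C,\kMod]$, and invokes Exercise F (page 106) of \cite{Fr} -- the recognition theorem for module categories -- to conclude $[\C,\kMod]\simeq \kMod^{\mathrm{End}(P)}$, with $\mathrm{End}(P)$ identified as $W$ (by Yoneda, $\mathrm{Nat}(\C(X,-),\C(Y,-))\cong\C(Y,X)$, and finiteness of $\Omega$ turns the relevant product into a finite direct sum). You instead factor the equivalence through $[\Omega,\kMod]$: first the restriction equivalence $[\C,\kMod]\simeq[\Omega,\kMod]$, using that biproducts are absolute colimits in the $\kMod$-enriched setting, and then the classical identification of $\kMod$-functors on a finite $\kMod$-category with left modules over its category algebra, via the complete orthogonal family of idempotents $e_X=1_X$. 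The two equivalences are in fact the same functor: under Yoneda, $\Hom(P,M)\cong\bigoplus_{X\in\Omega}M(X)\cong M(G)$ for $G=\bigoplus_{X\in\Omega}X$, which is your evaluation at the generator. The difference lies in what each proof presupposes. The paper's citation is shorter but leans on abelian-category machinery (cocompleteness, exactness, smallness and projectivity of $P$), whose verification from the two standing hypotheses is left to the reader; your argument never uses that $[\C,\kMod]$ is abelian, exhibits both directions of the equivalence explicitly, and isolates the only real work -- well-definedness and functoriality of the direct-sum extension from $\Omega$ to $\C$ -- exactly where you flag it. Your route also generalizes verbatim to any enriching base where finite direct sums are absolute, whereas the paper's emphasizes the projective-generator principle; both yield the same Green algebra application to $\C_A$.
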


\begin{proof}
Put
\[
P=\bigoplus_{X \in \Omega} \C(X,-).
\]
This is a small projective generator so Exercise F (page 106) of \cite{Fr} applies and $W$
is identified as End(P).
\end{proof}

In particular; this applies to the category $\C_A$ to obtain the \emph{Green algebra} $W_A$ of a Green functor
$A$: the point being that $A$ and $W_A$ have the same modules.

\section{Morita equivalence of Green functors} \label{Se14}
In this section, we look at the Morita theory of Green functors making use of adjoint two-sided modules rather
than \emph{Morita contexts} as in \cite{Bo1}.

As for any symmetric cocomplete closed monoidal category $\W,$ we have the monoidal bicategory $\Mod(\W)$
defined as follows, where we take $\W=\Mky$. Objects are monoids $A$ in $\W$ (that is,
$A: \E \ra \kMod$ are Green functors) and morphisms are modules
$M: \xymatrix@1@C=15pt{A \ar[r] |-{\object@{|}} & B}$ (that is, algebras for the monad $A*-*B$ on $\Mky$)
with a two-sided action
\[
\infer{\bar{\alpha}^M_{U,V,W} : A(U) \ox_k M(V) \ox_k B(W) \ra M(U \times V \times W)}
      {\alpha^M : A * M * B \ra M}.
\]
Composition of morphisms $M: \xymatrix@1@C=15pt{A \ar[r] |-{\object@{|}} & B}$ and
 $N: \xymatrix@1@C=15pt{B \ar[r] |-{\object@{|}} & C}$ is $M*_B N$ and it is defined via the coequalizer
 \[
\xymatrix@C=7.5ex{
{M*B*N}
\ar@<0.7ex>[r]^-{\alpha^M *1_N} \ar@<-0.7ex>[r]_-{1_M *\alpha^N}
& {M*N} \ar@<0.1ex>[r]
& {M*_B N} = N \o M}
\]
that is,
\[(M*_B N)(U) = \sum_{X,Y}  \Spn(\E)(X \times Y, U) \ox M(X) \ox_k N(Y) / \sim_B~. \]
The identity morphism is given by $A: \xymatrix@1@C=15pt{A \ar[r] |-{\object@{|}} & A.}$

The 2-cells are natural transformations $\theta :M \ra M'$ which respect the actions
\[\xygraph{
{A(U) \ox_k M(V) \ox_k B(W)}="a"
(:[r(4.5)] {M(U \times V \times W)}^-{\bar{\alpha}^M_{U,V,W}}
:[d(1.2)] {M'(U \times V \times W)~.}="t"^-{\theta_{U \times V \times W}},
:[d(1.2)] {A(U) \ox_k M'(V) \ox_k B(W)}_-{1 \ox_k \theta_V \ox_k 1}
:"t"_-{\bar{\alpha}^{M'}_{U,V,W}}
)}
\]
The tensor product on $\Mod(\W)$ is the convolution $\ast$. The tensor product of the modules
$M: \xymatrix@1@C=15pt{A \ar[r] |-{\object@{|}} & B}$ and $N: \xymatrix@1@C=15pt{C \ar[r] |-{\object@{|}} & D}$ is
$M*N: \xymatrix@1@C=15pt{A*C \ar[r] |-{\object@{|}} & B*D}$.

Define Green functors $A$ and $B$ to be \emph{Morita equivalent} when they are equivalent in $\Mod(\W)$.

\begin{proposition}
If $A$ and $B$ are equivalent in $\Mod(\W)$ then $\ModA \simeq \ModB$ as categories.
\end{proposition}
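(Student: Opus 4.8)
The plan is to recognize $\ModA$ as a hom-category of the monoidal bicategory $\Mod(\W)$ and then invoke the general principle that an equivalence between two objects of a bicategory induces an equivalence of the corresponding hom-categories. Concretely, I would first identify left $A$-modules with the morphisms $A \to J$ in $\Mod(\W)$, where $J$ is the Burnside functor, the unit for the convolution $\ast$ on $\W = \Mky$. Since $J$ is the monoidal unit, a right $J$-action on a Mackey functor is forced to be the canonical unit constraint and so carries no information; hence an $A$-$J$-bimodule is exactly a left $A$-module, giving $\ModA \simeq \Mod(\W)(A,J)$ and likewise $\ModB \simeq \Mod(\W)(B,J)$. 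This bookkeeping — verifying that the right $J$-action is canonical and that bimodule $2$-cells match $A$-module morphisms — is the one genuinely fiddly point, and I expect it to be the main obstacle, although it is routine once one uses that $J$ is the unit.

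Next I would unpack the hypothesis. An equivalence $A \simeq B$ in $\Mod(\W)$ supplies modules $P : A \to B$ and $Q : B \to A$ together with invertible $2$-cells $P \ast_B Q \cong A$ and $Q \ast_A P \cong B$, where we recall that the identity $1$-cell on a monoid is that monoid regarded as a bimodule over itself. Because composition in the bicategory is functorial in $2$-cells (whiskering), pre-composition with $Q$ and with $P$ yields functors
\[
Q \ast_A (-) : \Mod(\W)(A,J) \to \Mod(\W)(B,J), \qquad P \ast_B (-) : \Mod(\W)(B,J) \to \Mod(\W)(A,J),
\]
which under the identification above are functors $\ModA \to \ModB$ and $\ModB \to \ModA$. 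Concretely these send a left $A$-module $M$ to $Q \ast_A M$ with its induced left $B$-action, and a left $B$-module $N$ to $P \ast_B N$.

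Finally I would show these functors are mutually inverse up to natural isomorphism. Using the pseudo-associativity of composition in $\Mod(\W)$, the left-unit coherence, and the given equivalence isomorphisms, one computes
\[
P \ast_B (Q \ast_A M) \cong (P \ast_B Q) \ast_A M \cong A \ast_A M \cong M
\]
naturally in $M$, and symmetrically $Q \ast_A (P \ast_B N) \cong N$ naturally in $N$. These natural isomorphisms exhibit the desired equivalence $\ModA \simeq \ModB$. Since we require only an equivalence of underlying categories — not a monoidal or $\Mky$-enriched one — no coherence beyond the bicategory axioms is needed, so this last step is purely formal; the real content of the argument lies in the opening identification of $\ModA$ with the hom-category $\Mod(\W)(A,J)$.
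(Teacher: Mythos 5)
Your proposal is correct and is essentially the paper's own argument: the paper proves this in one line by observing that $\Mod(\W)(-,J):\Mod(\W)^{\op}\ra\CAT$ is a pseudofunctor and hence takes equivalences to equivalences, which is exactly your identification $\ModA\simeq\Mod(\W)(A,J)$ followed by composition with the equivalence data $P$, $Q$. Your version merely unpacks the pseudofunctoriality (whiskering, associativity and unit constraints) that the paper's citation-free one-liner leaves implicit.
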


\begin{proof}
$\Mod(\W)(-,J): \Mod(\W)^{\text{op}} \ra \CAT$ is a pseudofunctor and so takes equivalences to equivalences.
\end{proof}

Now we will look at the Cauchy completion of a monoid $A$ in a monoidal category
$\W$ with the unit $J$. The $\W$-category $\PA$ has underlying category $\Mod(\W)(J,A)= \Mod(A^{\text{op}})$
where $A^{\text{op}}$ is the monoid $A$ with commuted multiplication. The objects are
modules $M: \xymatrix@1@C=15pt{J \ar[r] |-{\object@{|}} & A}$; that is, right $A$-modules. The homs of $\PA$ are defined
by $(\PA)(M,N)= \Mod(A^{\text{op}})(M,N)$ (see the equalizer of Section \ref{Se13}).

The Cauchy completion $\QA$ of $A$ is the full sub-$\W$-category of $\PA$ consisting of the modules
$M: \xymatrix@1@C=15pt{J \ar[r] |-{\object@{|}} & A}$ with right adjoints
$N: \xymatrix@1@C=15pt{A \ar[r] |-{\object@{|}} & J}$. We will examine what the objects of $\QA$ are in more
explicit terms.

For motivation and preparation we will look at the monoidal category $\W = [\C, \S]$ where $(\C, \ox , I)$ is a monoidal category and $\S$ is the cartesian monoidal category of sets. Then $[\C, \S]$ becomes a monoidal
category by convolution. The tensor product $\ast$ and the unit $J$ are defined by
\begin{equation*}
\begin{split}
(M * N)(U) & = \int^{X,Y} \C(X \ox Y, U) \times M(X) \times N(Y) \\
J(U)& = \C(I,U).
\end{split}
\end{equation*}
Write $\Mod[\C,\S]$ for the bicategory whose objects are monoids $A$ in $[\C,\S]$ and whose morphisms are
modules $M: \xymatrix@1@C=15pt{A \ar[r] |-{\object@{|}} & B}$. These modules
have two-sided action
\[
\infer{\bar{\alpha}^M_{X,Y,Z} : A(X)  \times M(Y) \times B(Z) \ra M(X \ox Y \ox Z)~.}
      {\alpha^M : A * M * B \ra M}
\]
Composition of morphisms $M: \xymatrix@1@C=15pt{A \ar[r] |-{\object@{|}} & B}$ and
$N: \xymatrix@1@C=15pt{B \ar[r] |-{\object@{|}} & C}$ is given by the coequalizer
 \[
\xymatrix@C=7.5ex{
{M*B*N}
\ar@<0.7ex>[r]^-{\alpha^M *1_N} \ar@<-0.7ex>[r]_-{1_M *\alpha^N}
& {M*N} \ar@<0.1ex>[r]
& {M*_B N} }
\]
that is,
\[
(M*_B N)(U) = \sum_{X,Z} \C(X \ox Z, U) \times M(X) \times N(Z) / \sim_B
\]
where
\begin{align*}
(u, m \o b, n) & \sim_B (u, m, b \o n) \\
(t \o (r \ox s), m, n) & \sim_B (t, (Mr)m, (Ns)n)
\end{align*}
for ~ $ u: X \ox Y \ox Z \ra U,~ m \in M(X),~ b \in B(Y),~ n \in N(Z),~
t: X' \ox Z' \ra U, ~r: X \ra X',$  $s: Z \ra Z'$.

For each $K \in \C$, we obtain a module $A(K \ox -): \xymatrix@1@C=15pt{J \ar[r] |-{\object@{|}} & A}$. The action
\[
A(K \ox U) \ox A(V) \ra A(K \ox U \ox V)
\]
is defined by the monoid structure on $A$.

\begin{proposition}
Every object of the Cauchy completion $\QA$ of the monoid $A$ in $[\C, \S]$ is a retract
of a module of the form $A(K \ox -)$ for some $K \in \C$.
\end{proposition}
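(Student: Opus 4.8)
The plan is to exploit the adjunction defining membership in $\QA$ together with the identification of the modules $A(K \ox -)$ as the \emph{representable} right $A$-modules. Indeed, exactly as in Section~\ref{Se13} (the Kleisli description underlying Proposition~\ref{13.1}, transported from $\kMod$ to $\S$), the $A(K \ox -)$ are the images of the objects $K \in \C$ under the canonical $\W$-functor $\C \to \PA$, and they satisfy a Yoneda isomorphism
\[
\Mod(A^{\op})(A(K \ox -), M) \cong M(K),
\]
natural in $K$ and $M$; a morphism $A(K \ox -) \to M$ is thus the same as an element of $M(K)$. So to present an object $M$ of $\QA$ as a retract of some $A(K \ox -)$ I must produce a single object $K$ together with module maps $s : M \to A(K \ox -)$ and $r : A(K \ox -) \to M$ with $r \o s = 1_M$.

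The object $K$ and the two maps are read off from the adjunction. Let $N : A \to J$ be a right adjoint of $M$ in $\Mod(\W)$, with unit $\eta : J \to M *_A N$ and counit $\varepsilon : N * M \to A$. Evaluating $\eta$ at the monoidal unit and applying it to $1_I \in J(I) = \C(I,I)$ gives an element
\[
e = \eta_I(1_I) \in (M *_A N)(I).
\]
The decisive point, special to the base $\S = \Set$, is that this composite tensor is computed as a quotient of a coproduct of sets; hence $e$ is represented by a \emph{single} tuple $(\phi, m_0, n_0)$ with $\phi : X_0 \ox Y_0 \to I$, $m_0 \in M(X_0)$ and $n_0 \in N(Y_0)$. (In an additive setting one would here be forced to a finite direct sum of representables; over $\S$ one representable suffices, which is why the statement is clean for $[\C,\S]$.) I set $K := Y_0$.

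With this data I define $s_U : M(U) \to A(Y_0 \ox U)$ by $s_U(x) = \varepsilon_{Y_0 \ox U}(\text{class of }(\mathrm{id}, n_0, x))$, using the counit together with $n_0$; and $r_U : A(Y_0 \ox U) \to M(U)$ by $r_U(b) = M(\phi \ox 1_U)(m_0 \cdot b)$, using $m_0$, the right action of $A$ on $M$, and the map $\phi \ox 1_U : X_0 \ox Y_0 \ox U \to U$ — this $r$ is precisely the morphism corresponding under the Yoneda isomorphism above to $m_0 \in M(X_0)$, suitably reindexed. That $r \o s = 1_M$ is then exactly the first triangle identity $(1_M *_A \varepsilon)\o(\eta *_J 1_M) = 1_M$ for the adjunction, unwound through the chosen representative $(\phi, m_0, n_0)$: tracing $x \in M(U)$ around the triangle produces $M(\phi \ox 1_U)(m_0 \cdot \varepsilon_{Y_0 \ox U}(\mathrm{id}, n_0, x)) = x$, which is the assertion $r_U(s_U(x)) = x$.

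The steps that require genuine care — and where I expect the main obstacle to lie — are the bookkeeping verifications rather than any conceptual difficulty. First, one must check that $s$ and $r$ are morphisms of right $A$-modules and natural in $U$, not merely families of functions: the $A$-linearity and naturality of $s$ come from $\varepsilon$ being a $2$-cell (an $A$-bimodule map, hence compatible with the right action carried by $M$), while those for $r$ follow from functoriality and the module axioms, or directly from its Yoneda description. Second, one must make the passage from the abstract triangle identity to the pointwise equation above fully rigorous, keeping track of the associativity and unit coherence isomorphisms implicit in $J *_J M \cong M$, $(M *_A N) *_J M \cong M *_A (N *_J M)$ and $M *_A A \cong M$, and confirming that the single representative $(\phi, m_0, n_0)$ may be used consistently throughout, independently of the choices made in forming the coequalizer. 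Once these are in hand, $M$ is exhibited as a retract of $A(Y_0 \ox -)$, completing the proof.
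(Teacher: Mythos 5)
Your proposal is correct and follows essentially the same route as the paper's proof: extract a single representative $(u,p,q)$ (your $(\phi,m_0,n_0)$) from the unit evaluated at the identity, define the section via the counit $\bar{\epsilon}_{K,U}(q,-)$ and the retraction via $M(u \ox 1_U)(p\cdot-)$, and deduce $r \o i = 1_M$ from the triangle identity. The only additions are your explicit Yoneda description of maps out of $A(K\ox-)$ and your flagged coherence/linearity checks, which the paper dispatches with ``easily seen to be natural in $U$.''
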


\begin{proof}
Take a module $M: \xymatrix@1@C=15pt{J \ar[r] |-{\object@{|}} & A}$ in $\Mod[\C, \S]$. Suppose that
$M$ has a right adjoint $N: \xymatrix@1@C=15pt{A \ar[r] |-{\object@{|}} & J}$. Then we have the following actions:
$ A(V) \times A(W) \ra A(V \ox W)$, $M(V) \times A(W) \ra M(V \ox W),
A(V) \times N(W) \ra N(V \ox W)$
since $A$ is a monoid, $M$ is a right $A$-module, and $N$ is a left $A$-module respectively.

We have a unit $\eta: J \ra M*_A N$ and a counit $\epsilon: N*M \ra A$ for the adjunction.
The component $\eta_U: \C(I,U) \ra (M*_A N)(U)$ of the unit $\eta$ is determined by
\[
\eta' = \eta_U(1_I) \in \sum_{X,Z} \C(X \ox Z, I) \times M(X) \times N(Z) / \sim_A;
\]
so there exist
$u: H \ox K \ra I, \quad p \in M(H), \quad q \in N(K)$ such that
$\eta'=[u,p,q]_A$.
Then
\[
\eta_u(f: I \ra U) =[fu: H \ox K \ra U, p, q]_A.
\]
We also have $\bar{\epsilon}_{Y,Z}: NY \times MZ \ra A(Y \ox Z)$ coming from $\epsilon$.
The commutative diagram
\[ \xygraph{
{M(U)}="s"
(:[r(5.3)] {\displaystyle {\sum_{X,Y,Z} \C(X \ox Y \ox Z, U) \times
M(X) \times N(Y) \times M(Z) / \sim}}^-{\eta_U * 1}
:[d(1.8)] {M(U)}="t"^-{1* \epsilon_U},
:"t" _-{1}
)}
\]
yields the equations
\begin{equation}\label{3}
\begin{split}
m & = (1*\epsilon_U)(\eta_U *1)(m) \\
    & = (1*\epsilon_U)[u \ox 1_U, p,q,m]_A \\
    & = M(u \ox 1_U)(p ~ \bar{\epsilon}_{K,U} (q,m))
\end{split}
\end{equation}
for all $m \in M(U)$.

Define
\[
\xymatrix@C=10ex{M(U)  \ar@/_/ [r] _{i_U} &
A(K \ox U) \ar@<-0.2ex>@/_/ [l] _{r_U}}
\]
by $i_U(m)=\bar{\epsilon}_{K,U} (q,m),  ~ r_U(a)=M(u \ox 1_U)(p.a)$.
These are easily seen to be natural in $U$. Equation (\ref{3}) says that $r \o i = 1_M$. So $M$ is a retract of
$A(K \ox -)$.
\end{proof}

Now we will look at what are the objects of $\QA$ when $\W$= $\Mky$ which is a symmetric monoidal closed, complete and cocomplete category.

\begin{theorem}
The Cauchy completion $\QA$ of the monoid $A$ in $\Mky$ consists of all the retracts
of modules of the form
\[
\bigoplus^k_{i=1} A(Y_i \times -)
\]
for some $Y_i \in \Spn(\E)$, $i=1,\ldots, k$.
\end{theorem}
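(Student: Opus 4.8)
The plan is to run the argument of the preceding Proposition once more, now with $\W=\Mky=[\T,\kMod]_+$ where $\T=\Spn(\E)$, the one essential change being that the enrichment is in $\kMod$ rather than in $\Set$, so that elements of the relevant hom-objects are \emph{finite linear combinations} rather than single elements; it is exactly this that produces the direct sum in the statement. First I would take an object $M$ of $\QA$, that is, a right $A$-module admitting a right adjoint $N$ in $\Mod(\W)$, with unit $\eta\colon J\ra M*_A N$ and counit $\epsilon\colon N*M\ra A$. I would also record that for each $Y\in\Spn(\E)$ the formula $A(Y\times-)$ defines a right $A$-module via the multiplication of $A$ (the case $Y=I$ giving the regular module $A$ itself), and that finite direct sums of these are again right $A$-modules since $\Mky$ is additive.

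The key step is the analysis of the unit. Since $J$ is the Burnside functor, one has $\Mky(J,P)\cong P(I)$ for every Mackey functor $P$, so $\eta$ is determined by the single element $\eta_I(1_I)\in(M*_A N)(I)$. Now $(M*_A N)(I)$ is a $k$-module, being a quotient of $\bigoplus_{X,Z}\Spn(\E)(X\times Z,I)\ox M(X)\ox_k N(Z)$; hence $\eta_I(1_I)$ is represented by a \emph{finite} sum $\sum_{i=1}^{k}[u_i,p_i,q_i]$ with $u_i\in\Spn(\E)(H_i\times K_i,I)$, $p_i\in M(H_i)$ and $q_i\in N(K_i)$. Setting $Y_i=K_i$, I would define, componentwise in $i$ and then summed, the maps
\[
i_U(m)=\big(\bar{\epsilon}_{K_i,U}(q_i,m)\big)_{i=1}^{k},\qquad
r_U\big((a_i)_{i=1}^{k}\big)=\sum_{i=1}^{k}M(u_i\times1_U)(p_i\,.\,a_i),
\]
between $M(U)$ and $\bigoplus_{i=1}^{k}A(Y_i\times U)$. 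The triangle identity $(1*\epsilon)(\eta*1)=1_M$ then reads, for every $m\in M(U)$, as $m=\sum_{i=1}^{k}M(u_i\times1_U)\big(p_i\,.\,\bar{\epsilon}_{K_i,U}(q_i,m)\big)$, which is precisely $r\o i=1_M$. Thus $M$ is exhibited as a retract of $\bigoplus_{i=1}^{k}A(Y_i\times-)$.

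It then remains to check that $i$ and $r$ are morphisms of right $A$-modules (naturality in $U$ together with compatibility with the right action), which follows as in the preceding Proposition from $\epsilon$ being an $A$-$A$-bimodule morphism and from associativity of the action; these are the routine verifications I would not grind through. For the reverse inclusion I would show that every retract of $\bigoplus_{i=1}^{k}A(Y_i\times-)$ already lies in $\QA$: each $A(Y\times-)$ admits a right adjoint, built from the self-duality $Y^*=Y$ of $Y$ in the compact closed category $\Spn(\E)$ together with the monoid structure of $A$ (for $Y=I$ this is the adjunction exhibiting the regular module $A$ as its own dual, with unit the monoid unit $J\ra A$ and counit the multiplication $A*A\ra A$); finite direct sums of adjoint modules are adjoint because direct sums are biproducts; and $\QA$, being a Cauchy completion, is closed under retracts since idempotents split in the abelian category $\Mky$.

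The main obstacle I anticipate is this reverse direction, specifically producing the right adjoint of each generating module $A(Y\times-)$ and verifying its triangle identities cleanly; the forward direction is a near-transcription of the preceding Proposition once one notices that linearity replaces the single representing triple $[u,p,q]$ by a finite sum, and that it is this finiteness which forces the direct sum $\bigoplus_{i=1}^{k}$. A tidier alternative, dispatching both inclusions at once, is to identify $\QA$ with the Cauchy completion of the $k$-linear category $\C_A$ of Section \ref{Se13}, under which the representable $\C_A(Y,-)$ corresponds to $A(Y\times-)$; the Cauchy completion of a $\kMod$-category is exactly the closure of its representables under finite direct sums and retracts, which is the content of the theorem.
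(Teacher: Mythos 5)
Your proposal is correct and follows essentially the same route as the paper's proof: extract a finite representing sum from the unit component at the unit object, use the triangle identity to exhibit $M$ as a retract of $\bigoplus_{i=1}^{k}A(Y_i\times -)$, and, for the converse, equip each $A(Y\times -)$ with a right adjoint via the self-duality of $Y$ in $\Spn(\E)$ together with the adjunction induced by the unit $J\ra A$, closing up under finite direct sums and retracts. The tidier alternative you mention at the end (identifying $\QA$ with the Cauchy completion of the $\kMod$-category $\C_A$) is also sound, but your main argument is the paper's own.
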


\begin{proof}
Take a module $M: \xymatrix@1@C=15pt{J \ar[r] |-{\object@{|}} & A}$ in $\Mod(\W)$ and suppose that
$M$ has a right adjoint $N: \xymatrix@1@C=15pt{A \ar[r] |-{\object@{|}} & J}$.
For the adjunction, we have a unit $\eta: J \ra M*_A N$ and a counit $\epsilon: N*M \ra A$.
We write $\eta_U: \Spn(\E)(1,U) \ra (M*_A N)(U)$ is the component of the unit $\eta$ and it is determined by
\[
\eta' = \eta_1(1_1) \in \sum_{i=1}^k \Spn(\E)(X \times Y, 1) \ox M(X) \ox N(Y) / \sim_A.
\]
Put
\[
\eta' = \eta_1(1_1) = \sum_{i=1}^k [(S_i: X_i \times Y_i \ra 1) \ox m_i \ox n_i ]_A
\]
where $m_i \in M(X_i)$ and $n_i \in N(Y_i)$.
Then
\[
\eta_U(T: 1 \ra U) = \sum_{i=1}^k [(S_i \times T) \ox m_i \ox n_i ]_A.
\]
We also have $\bar{\epsilon}_{Y,Z}:NY \ox MZ \ra A(Y \times Z)$ coming from $\epsilon$.
The commutative diagram
\[ \xygraph{
{M(U)}="s"
(:[r(5.3)] {\displaystyle {\sum_{i=1}^k \Spn(\E)(X_i \times Y_i \times U, U) \ox
M(X_i) \ox N(Y_i) \ox M(U) / \sim_A}}^-{\eta_U * 1}
:[d(1.8)] {M(U)}="t"^-{1* \epsilon_U},
:"t" _-{1}
)}
\]
yields
\begin{equation*}
m = \sum_{i=1}^k [M(P_i \times U) \ox m_i \ox \epsilon(n_i \ox m) ]
\end{equation*}
where $m \in M(U)$ and $P_i: X_i \times Y_i \ra U$.

Define a natural retraction
\[
\xymatrix@C=10ex{M(U)  \ar@<-0.2ex>@/_/ [r] _-{i_U} &
{\displaystyle \bigoplus_{i=1}^k } A(Y_i \times U) \ar@/_/ [l] _-{r_U}}
\]
by
\[
r_U(a_i) = M(P_{i_k} \times U)(m_i.a_i), \quad
i_U(m) = \sum_{i=1}^k  \bar{\epsilon}_{Y_i ,U} (n_i \ox m).
\]
So $M$ is a retract of $\displaystyle{\bigoplus_{i=1}^k} A(Y_i \times -)$.

It remains to check that each module $A(Y \times -)$ has a right adjoint since retracts and direct
sums of modules with right adjoints have right adjoints.

In $\C = \Spn(\E)$ each object $Y$ has a dual (in fact it is its own dual). This implies that the module
$\C(Y,-): \xymatrix@1@C=15pt{J \ar[r] |-{\object@{|}} & J}$ has a right dual (in fact it is $\C(Y,-)$ itself) since
the Yoneda embedding $\C^{\text{op}} \ra [\C, \kMod]$ is a strong monoidal functor. Moreover, the unit
$\eta: J \ra A$ induces a module $\eta_*=A : \xymatrix@1@C=15pt{J \ar[r] |-{\object@{|}} & A}$ with a right
adjoint $\eta^* : \xymatrix@1@C=15pt{A \ar[r] |-{\object@{|}} & J}$. Therefore, the composite
\[
\xymatrix@C=7ex{J \ar[r] |-{\object@{|}}^{\C(Y,-)} & J \ar[r] |-{\object@{|}}^{\eta_*} & A},
\]
which is $A(Y \times -)$, has a right adjoint.
\end{proof}

\begin{theorem}
Green functors $A$ and $B$ are Morita equivalent if and only if $\QA \simeq \mathcal{Q} B$ as $\W$-categories.
\end{theorem}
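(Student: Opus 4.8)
The plan is to read this as the monoid case of the general Morita theorem for $\W$-enriched categories. I would regard a Green functor $A$ as the one-object $\W$-category $\Sigma A$ (its suspension), so that $\Mod(\W)$ is the full sub-bicategory of $\W$-categories-and-modules spanned by the one-object $\W$-categories, and $\QA$ is the Cauchy completion $\mathcal{Q}(\Sigma A)$. Under this reading, Morita equivalence of $A$ and $B$ is \emph{exactly} equivalence of $\Sigma A$ and $\Sigma B$ in $\Mod(\W)$, and the two implications become the statement that Morita equivalence matches $\W$-equivalence of Cauchy completions. The two local equivalences already available --- $\widehat{(-)}$ of Section \ref{Se12} and the characterization of $\QA$ from the preceding theorem --- are the tools I would use.

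For the forward implication I would suppose $A\simeq B$ in $\Mod(\W)$, witnessed by modules $M\colon A\to B$ and $N\colon B\to A$ with $N *_A M\cong B$ and $M *_B N\cong A$ (the identity bimodules). Each of $M,N$ is then a module with a right adjoint, being mutually inverse. Post-composition $-*_A M\colon \PA\to\PB$ is a $\W$-functor, and since adjoints compose in the bicategory $\Mod(\W)$ it carries a module with a right adjoint to one with a right adjoint; hence it restricts to a $\W$-functor $\QA\to\QB$, and symmetrically $-*_B N\colon \QB\to\QA$. The isomorphisms $P *_A (M *_B N)\cong P$ and $Q *_B (N *_A M)\cong Q$ (associativity plus the identity laws) show these restrictions are mutually inverse, so $\QA\simeq\QB$.

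For the converse I would start from the fact that Cauchy completion does not alter the presheaf category: $[\QA,\kMod]\simeq\ModA$, which refines the equivalence $[\C_A,\kMod]\simeq\ModA$ of Section \ref{Se13} once $\C_A$ is closed under retracts and finite direct sums (both absolute colimits, invisible to presheaves). Thus a $\W$-equivalence $\QA\simeq\QB$ induces a $\W$-equivalence of the cocomplete categories $\ModA\simeq\ModB$. I would then invoke the local equivalence of Section \ref{Se12}, now with base $\W$, in the form $\Mod(\W)(A,B)\simeq\CO_\W(\ModA,\ModB)$: the equivalence of module categories, being an object of $\CO_\W$, corresponds to an invertible module $M\colon A\to B$, i.e.\ a Morita equivalence. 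Transitivity of Morita equivalence absorbs the passage between $A$ and its associated small $\W$-category $\C_A$.

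The main obstacle is the converse, and within it two points deserve care. First, the identification $[\QA,\kMod]\simeq\ModA$ must be made precise: its content is that the objects of $\QA$ are exactly the small-projective (absolutely presentable) objects of $\ModA$, which is precisely what the previous theorem's description of $\QA$ as retracts of finite sums $\bigoplus_{i=1}^{k} A(Y_i\times -)$ secures, so that any equivalence of module categories automatically preserves them. Second, one must verify that the given $\QA\simeq\QB$ is a genuine $\W$-equivalence (not merely ordinary), so that the induced functor on presheaf categories is a $\W$-equivalence in $\CO_\W$ to which $\widehat{(-)}$ applies; tracking the $\W$-enrichment through the chain $\QA\simeq\QB \leadsto \ModA\simeq\ModB \leadsto$ invertible bimodule is where the real bookkeeping lies.
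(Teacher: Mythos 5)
Your argument is correct, and it is worth saying plainly how it sits relative to the paper: the paper's entire proof of this theorem is the citation ``See \cite{Li2} and \cite{St}'', i.e.\ it outsources the statement to Lindner's and Street's general theorem that enriched categories (here, one-object $\W$-categories, viz.\ monoids) are Morita equivalent precisely when their Cauchy completions are $\W$-equivalent. What you have written is, in substance, the standard proof of that cited general theorem, so you have supplied exactly what the paper leaves to the references. Your forward direction is the textbook argument and needs no adjustment: an equivalence in $\Mod(\W)$ is an adjoint equivalence, composition with it preserves the property of having a right adjoint, so it restricts to mutually inverse $\W$-functors $\QA \to \mathcal{Q} B$ and back. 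For the converse, the two points you flag are indeed the right ones, and neither is a genuine gap --- but your detour through $\kMod$-valued functor categories ($[\QA,\kMod]\simeq\ModA$ refining Section~\ref{Se13}) is what creates the enrichment bookkeeping you worry about, and it can be avoided by staying at the $\W$-level throughout. Every object of $\QA$ is an absolute colimit of the single object $A$: by the preceding theorem it is a retract of a finite direct sum of the modules $A(Y_i\times -)$, and these are copowers of $A$ by the dualizable objects $\C(Y_i,-)$ of $\W$ (retracts, finite sums, and copowers by objects with duals are all absolute). Since absolute colimits are preserved by \emph{every} $\W$-functor, restriction along the inclusion of the one-object $\W$-category $A$ into $\QA$ gives a $\W$-equivalence $[\QA,\W]\simeq\ModA$; hence a $\W$-equivalence $\QA\simeq\mathcal{Q} B$ restricts to a cocontinuous $\W$-equivalence $\ModA\simeq\ModB$, and the local equivalence of Section~\ref{Se12}, applied with base $\W$, converts this into an invertible module between $A$ and $B$, i.e.\ a Morita equivalence --- with no appearance of $\C_A$, no passage through $\kMod$, and no separate verification of enrichment, since the hypothesis was a $\W$-equivalence to begin with.
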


\begin{proof}
See \cite{Li2} and \cite{St}.
\end{proof}



\end{document}